\newtheorem{theorem}{Theorem}[section]
\newtheorem{lemma}[theorem]{Lemma}
\newtheorem{proposition}[theorem]{Proposition}
\newtheorem{corollary}[theorem]{Corollary}
\newtheorem{remark}[theorem]{Remark}
\newtheorem{definition}[theorem]{Definition}
\newcommand{\Sob}{\!\!\vbox{\hbox{\,\,\,\,\tiny $\circ$}\vspace*{-1.0ex} \hbox{ $H^{1}$}\vspace*{-0.0ex}}}
\newcommand{\SobSl}{\!\!\vbox{\hbox{\,\,\,\tiny $\circ$}\vspace*{-1.6ex} \hbox{  \scriptsize $H^{1}$}\vspace*{-0.0ex}}}
\title{Spectral boundary value problems for Laplace--Beltrami operator: moduli of continuity of eigenvalues under domain deformation}
\author{A.M. Stepin, I.V. Tsylin}
\begin{document}
\maketitle
\abstract{The paper is pertaining to the spectral theory of operators and boundary value problems for differential equations on manifolds. Eigenvalues of such problems are studied as functionals on the space of domains. Resolvent continuity of the corresponding operators is established under domain deformation and estimates of continuity moduli of their eigenvalues $\slash$ eigenfunctions are obtained provided the boundary of nonperturbed domain is locally represented as a graph of some continuous function and domain deformation is measured with respect to the Hausdorff--Pompeiu metric.}

\section{Introduction.}

Let $M$ be a smooth connected compact orientable Riemannian manifold (possibly with boundary), $\mathcal{A}$ --- elliptic differential operator of second order on $M$. For an open subset $\Omega \subsetneq M$, $\bar{\Omega} \cap \partial M = \varnothing$, the following eigenvalue problem
\begin{equation}
\label{SpecDir}
\mathcal{A} u = \lambda u, \,\,\,\,  u \in \!\!\vbox{\hbox{\,\,\,\,\tiny $\circ$}\vspace*{-1.0ex} \hbox{ $H^{1}$}\vspace*{-0.0ex}}(\Omega),
\end{equation}
is considered; here solutions are understood in the weak (variational) sense.

Our aim is to estimate moduli of continuity for eigenvalues $\{\lambda_k\}$ of (\ref{SpecDir}) with multiplicities taken into account and considered as functions of rough domain $\Omega$; among surveys on this and related topics we notice \cite{Bucur, HenrotEng, Lemenant}. 

The problem can be considered in the context of spectral stability of inverse operator $\mathcal{A}^{-1}$ under small perturbations of $\Omega$. In the framework of this approach it is sufficient to obtain estimate of convergence rate for corresponding inverse operators with respect to uniform topology (resolvent convergence \cite{Maslov}).

A convenient instrument for dealing with the resolvent convergence is (equivalent to it) convergence in the sense of Mosco (see \cite{Mosco, Tsylin}) of the spaces $\Sob(\Omega)$. In the case when $M \subset \mathbb{R}^d$ is compact, Frehse\cite{Frehse} obtained capacitive conditions equivalent to Mosco's convergence.

In nineties conditions imposed on variations of plane domains (w.r.t. Hausdorff metric) were investigated so that to ensure the uniform resolvent convergence (see \cite{Bucur}). However the obtained results claimed only the fact of convergence without quantitative estimates.

Beginning with 2002 some progress was achieved in proving such estimates in case of plane domains, mainly, due to Burenkov and Davies' approach \cite{Burenkov}, that was further developed in a series of papers by Burenkov, Lamberti, Lanza de Cristoforis (see a survey in \cite{BurenkovLamberti}). Method of these authors made it possible to estimate the upper semicontinuity in $\Omega$ for eigenvalues of the problem (\ref{SpecDir}) if variation of domains is restricted to some technical class.

In the present paper resolvent continuity of the boundary value problem ($\mathcal{A}u = f$, $u \in \Sob(\Omega)$) with respect to domain perturbation (see th. \ref{th_42}) is established and using this fact estimates for moduli of continuity are obtained for eigenvalues and eigenfunctions\footnote{in the sense of generalized angle (see, for instance, \cite{Kato}, IV.2)} of the problem (\ref{SpecDir}) provided that 1) the boundary of nonperturbed domain can be locally represented as a graph of some continuous function and  2) the perturbation of domains is measured by Hausdorff--Pompeiu metric $d^{\mathcal{HP}}$ (see sections \ref{sect22}, \ref{Haus_p}).

To formulate our result about eigenvalues we write $\partial \Omega \in C^{0,\omega}$ if there exists an atlas for the manifold $M$ such that the intersection of $\partial \Omega$ with each of its charts either is empty or can be represented in local coordinates as the graph of a function with modulus of continuity not exceeding $C\cdot \omega$, where $\omega$ is a nondecreasing semi-additive function such that $\omega(0) = 0$ and $C$ is a positive constant.

\begin{theorem}
\label{result}
Let $\mathcal{A}$ be a strongly elliptic operator with Lipschitz coefficients on the Riemannian manifold $(M,g)$, $\Omega_1$ be a domain in $M$, $\bar{\Omega}_1 \cap \partial M = \varnothing$, $\partial \Omega_1 \in C^{0,\omega}$, where $\omega$ is some modulus of continuity. Then there exist positive constants $C_n = C_n( \Omega_1, \mathcal{A}, M)$, $\delta_0  = \delta_0(\Omega_1, \omega, M)$ such that for any domain $\Omega_2 \subsetneq M$, satisfying condition $\epsilon = d^{\mathcal{HP}}(\Omega_1, \Omega_2)\leq \delta_0$ the following estimate holds:
$$
|\lambda_n(\Omega_1) - \lambda_n(\Omega_2)| \leq C_n \cdot ( \omega (\epsilon) + \epsilon).
$$
\end{theorem}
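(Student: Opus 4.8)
The plan is to estimate the eigenvalue difference via the variational (min-max) characterization combined with the resolvent continuity established in Theorem~\ref{th_42}. The key idea is that eigenvalues are stable functionals of the resolvent operator, so control on the resolvent in operator norm translates directly into eigenvalue estimates, with the modulus of continuity $\omega(\epsilon)+\epsilon$ coming from how the $C^{0,\omega}$-graph boundary controls the geometric discrepancy between $\Omega_1$ and $\Omega_2$ when they are $\epsilon$-close in $d^{\mathcal{HP}}$.

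First I would set up the self-adjoint (or, for non-symmetric $\mathcal{A}$, the sectorial) realization $A_\Omega = \mathcal{A}$ with form domain $\Sob(\Omega)$, and recall that its compact resolvent $A_\Omega^{-1}$ has eigenvalues that are continuous functions of the operator in uniform topology: if $\|A_{\Omega_1}^{-1} - A_{\Omega_2}^{-1}\| \le \eta$ (interpreting both resolvents as acting on $L^2(M)$ after extending by zero, the standard device for comparing operators on different domains), then the $n$-th eigenvalues $\mu_n$ of the resolvents satisfy $|\mu_n(\Omega_1)-\mu_n(\Omega_2)| \le \eta$ by Weyl-type perturbation inequalities for compact self-adjoint operators (or the generalized-angle / gap estimates referenced via \cite{Kato}). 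Converting back through $\lambda_n = 1/\mu_n$ (away from zero, which is ensured by the lower bound on the spectrum from strong ellipticity) yields $|\lambda_n(\Omega_1)-\lambda_n(\Omega_2)| \le C_n \eta$, where $C_n$ absorbs the separation of $\lambda_n$ from $0$ and the comparison of the $n$-th eigenvalues. This reduces the entire theorem to estimating the resolvent norm $\eta$ by $\omega(\epsilon)+\epsilon$.

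Next I would invoke Theorem~\ref{th_42}, which asserts resolvent continuity under domain perturbation; the substance of the present proof is to make that continuity quantitative in terms of $d^{\mathcal{HP}}$. To do this I would exploit the hypothesis $\partial\Omega_1 \in C^{0,\omega}$: locally the boundary is the graph of a function with modulus of continuity $\le C\omega$, and an $\epsilon$-Hausdorff--Pompeiu perturbation of such a domain produces a symmetric-difference region whose measure and whose capacitive/width data are controlled by $\omega(\epsilon)+\epsilon$. Concretely, for a test function $u \in \Sob(\Omega_1)$ I would build a comparison function in $\Sob(\Omega_2)$ by cutting off near the boundary in the thin collar of width $\sim\omega(\epsilon)+\epsilon$ determined by the graph representation, and estimate the resulting energy defect. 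The graph structure is precisely what converts the one-dimensional Hausdorff displacement $\epsilon$ into a controlled $(d{-}1)$-dimensional-by-$\omega(\epsilon)$ volume of the collar, giving the additive $\omega(\epsilon)+\epsilon$ bound on both the quadratic-form discrepancy and the $L^2$-discrepancy, hence on the resolvent difference.

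The main obstacle, I expect, is the two-sided nature of the estimate and the mutual transplantation of test functions between $\Sob(\Omega_1)$ and $\Sob(\Omega_2)$. For the inequality in one direction one extends functions from the smaller region to the larger, which is essentially free, but for the reverse direction one must restrict or modify functions supported near $\partial\Omega_1$ without losing control of the Dirichlet energy in the collar. Here the continuity modulus $\omega$ enters crucially: only the $C^{0,\omega}$ regularity guarantees that the collar of width $\omega(\epsilon)+\epsilon$ has small capacity, so that the cutoff costs an energy increment of order $\omega(\epsilon)+\epsilon$ rather than something uncontrolled. I would handle the non-flatness of the Riemannian metric $g$ and the Lipschitz variable coefficients of $\mathcal{A}$ by working in the local graph charts, where the strong ellipticity and Lipschitz bounds give uniform comparability of the form to the flat Dirichlet integral, so that all collar estimates reduce to the Euclidean graph computation with constants depending only on $\Omega_1$, $\mathcal{A}$, $M$ (matching the claimed dependence of $C_n$ and $\delta_0$).
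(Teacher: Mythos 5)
Your reduction to an operator-norm resolvent estimate loses a square root, and that loss is fatal to the claimed rate. What Theorem \ref{th_42} (and any collar/cutoff energy argument of the kind you sketch) actually provides is a bound on the \emph{squared} energy norm: $\| u_1 - u_2 \|_V^2 \leq \Gamma\, \phi(\epsilon)\, \| f \|_{L'} \| f \|_{V'}$ with $\phi(\epsilon) = \epsilon + \omega(\epsilon)$. Hence the resolvent difference, viewed as an operator on $L_2$ via extension by zero, is bounded in norm only by $C\sqrt{\phi(\epsilon)}$, not by $C\phi(\epsilon)$. Feeding $\eta \sim \sqrt{\phi(\epsilon)}$ into the Weyl-type inequality $|\mu_n(T_1) - \mu_n(T_2)| \leq \| T_1 - T_2 \|$ yields $|\lambda_n(\Omega_1) - \lambda_n(\Omega_2)| \leq C_n \sqrt{\omega(\epsilon) + \epsilon}$, which is strictly weaker than the assertion. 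Your collar computation does not rescue this: the cutoff $u \mapsto \chi u$ costs energy of order $\phi(\epsilon)\|u\|^2$ \emph{in the square}, which is the same information as Theorem \ref{th_42} and again gives only $\sqrt{\phi(\epsilon)}$ at the level of norms of differences. To recover the linear rate through the operator-norm route you would need an extra ingredient you never supply --- for instance a duality/symmetry step for nested domains (compare both $\Omega_1$ and $\Omega_2$ with $\mathcal{O}_{\epsilon}(\Omega_1)$, write $(g, u_1 - u_2)_{L_2} = \Phi(v_1 - v_2, u_1 - u_2)$ for the solutions with data $f$ and $g$, and use the energy bound twice), exploiting self-adjointness of $\mathcal{A}$.

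The paper sidesteps the loss in a different way: eigenvalue perturbation is quadratic in the eigenfunction perturbation, and this is implemented via Lemma \ref{Birkhoff}, whose hypotheses are bounds on the squared projection defects $\| P_{\Omega}u - u \|_V^2 \leq A_n \| u \|_L^2$ over the span $S_n^{(1)}$ of the first $n$ eigenfunctions, and whose conclusion is an eigenvalue shift \emph{linear} in $A_n$. Applying Theorem \ref{th_42} with data $f = \lambda_n^{(i)} u_n^{(i)}$ gives precisely $A_n \leq C_n \phi(\epsilon)$, hence the full rate $\omega(\epsilon) + \epsilon$, with the two-sided estimate obtained by exchanging the roles of $\Omega_1$ and $\Omega_2$. (A direct min--max argument with transplanted test functions, as in \cite{BurLam}, would also preserve the rate, since Rayleigh quotients are quadratic in the test function; your opening sentence gestures at this, but the reduction you actually carry out is the operator-norm one.) As written, your argument proves at best the exponent-$1/2$ version of the theorem.
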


Besides, a generalization of Burenkov--Lamberti theorem \cite{BurLam} to the case of domains on manifold (cor. \ref{cor_bur}) is obtained. These results were announced in  \cite{StepinTsylin}.

\section{Basic notions.}

Everywhere below we assume that the Riemannian manifold $(M,g)$ is $C^{1,1}$--smooth connected orientable compact (possibly with boundary); coordinate homeomorphisms map in $\mathbb{R}^d$ ($d = \mathrm{dim} M$) endowed with the standard Euclidean norm $|\cdot|$:
$$
|x|^2 = \sum_{i} (x^i)^2, \,\,\,\,\,\, x = (x^1, \ldots, x^d) \in \mathbb{R}^d, \,\,\,\,\, e_d = (0, \ldots, 0, 1).
$$

\subsection{Conditions imposed on operator $\mathcal{A}$.}

Let $\mathcal{A}'$ be a differential operation on $(M,g)$ locally representable in the form
$$
- \frac{1}{\sqrt{\det g}} \partial_{i} \left( a^{ij} \sqrt{\det g} \, \partial_{j} u \right).
$$
Assume that coefficients $a^{ij}$ define a continuous symmetric section $\textbf{A}$ of $T^2 M$. Denote $\textbf{G}$ the section of $T^2 M$ associated with the Riemannian structure $g$; let bilinear forms $\textbf{A}_x$ and $\textbf{G}_x$ in $T^*_x M \times T^*_x M$ be values of the sections $\textbf{A}$ and $\textbf{G}$. We assume that the following conditions are fulfilled.

\begin{enumerate}
\item[\textbf{A1}] There is a positive constant $\alpha$ such that $\forall x \in M\,\, \forall \xi \in T^*_x M \Rightarrow \alpha \textbf{G}_x(\xi, \xi) \leq \textbf{A}_x (\xi, \xi);$
\item[\textbf{A2}] A sections $\textbf{A}$ belongs to the space $C^{0,1}(M)$; a norm in $C^{0,1}(M)$ is defined by fixing some finite subatlas $\{(U, \kappa_U)\}_{U \in \mathcal{U}}$
\begin{align*}
\| \textbf{A} \|^\mathcal{U}_{C^{0,1}(M)} \stackrel{\mathrm{def}}{=} & \max_{x \in M} \max_{\xi \in T^*_x M, \xi \neq 0} \frac{\textbf{A}_x(\xi,\xi)}{\textbf{G}_x(\xi,\xi)} + \sum_{U \in \mathcal{U}} \max_{ij} \left[ a^{ij}_U \right]_{C^{0,1}(\kappa_U(U))},\\
[v]_{C^{0,1}(\kappa_U(U))} = &\sup_{x,y \in \kappa_U(U) , x\neq y} \frac{|v(x) - v(y)|}{|x-y|}, \,\,\,\, v:\kappa_U(U) \to \mathbb{R},
\end{align*}
where $a^{ij}_U$ are the coordinates of sections $\textbf{A}$ with respect to mapping $\kappa_U$.
\end{enumerate}
All the norms $\| \cdot \|^\mathcal{U}_{C^{0,1}(M)}$ are equivalent as regards their dependence on the choice of finite subatlas.

As the scalar products in $\Sob(\Omega)$ and $L_2(\Omega)$, $\Omega \subsetneq M$, we choose:
$$
(u,v)_{\SobSl(\Omega)} = \int_{\Omega} \textbf{G}(\nabla u, \nabla v) d\mu, \,\,\,\, (u,v)_{L_2(\Omega)} = \int_\Omega uv d\mu,
$$
where measure $\mu$ is associated with the Riemann metric $g$. Let $\Phi$ be continuous in $\Sob(M)$ bilinear form defined by the differential operation $\mathcal{A}'$,
$$
\Phi(u,v) = \int_M \textbf{A}(\nabla u, \nabla v) \, d\mu.
$$
Since the form $\Phi$ is positive and bounded in $\Sob(\Omega)$, operator $\mathcal{A}$ associated with $\Phi$ is uniquely defined (see, for example, \cite{Kato}); a function $u\in \Sob(\Omega)$ is the weak solution of the boundary value problem
\begin{equation}
\label{ProbDir}
\mathcal{A} u = f, \,\,\,\, f \in L_2(\Omega)
\end{equation}
if and only if $\forall v \in \Sob(\Omega)$
\begin{equation}
\label{weak_p}
\Phi(u,v)= \int_M f v d\mu.
\end{equation}

\subsection{Domains with boundaries of the class $C^{0,\omega}$. \\ The $\omega$--cusp condition.}
\label{sect22}

For a mapping $\omega: \mathbb{R}_+ \to \mathbb{R}_+$ such that $\omega(r) - \omega(0)$ is nonnegative and semi-additive we set $\psi(r) = \sqrt{r^2 + \omega(r)^2}$, $\phi(r) = r + \omega(r)$. Let $B_{\rho}(x)\subset \mathbb{R}^d$ be the ball with center $x$ and radius $\rho$ in the norm $|\cdot|$; we will often use the notion $B_r(X) \stackrel{\mathrm{def}}{=} \cup_{x \in X} B_r(x)$ for a set $X \subset \mathbb{R}^d$.
\begin{definition}
\label{W_def1}
We say that an open set $\Omega \subset \mathbb{R}^d$ satisfies the uniform $\omega$--cusp condition with parameter $r$ at a point $x$ if there exists $\xi_x\in\mathbb{R}^d$, $|\xi_x| = 1$, such that
\begin{enumerate}
\item[\textbf{W1}] $\left[ \left(B_{3\psi(r)}(x) \cap \Omega\right) - \mathcal{C}_{\omega, r}(\xi_x)\right] \cap B_{2\psi(r)}(x) \subset \Omega$;
\end{enumerate}
where $\mathcal{C}_{\omega, r}(\xi_x)$ is obtained from $\mathcal{C}_{\omega, r}(e_d)\stackrel{\mathrm{def}}{=}\mathcal{S}_{\omega, r}(e_d) \cup \mathcal{F}_{\omega, r}(e_d)$ by a rotation superposing $e_d$ on $\xi_x$ where $\mathcal{F}_{\omega, r}(e_d) = \left\{ z = (\tilde{z}, z^d) \in \mathbb{R}^d:\,\,  |z| < \psi(r),\,\,\, z^d \geq \omega(r) \right\}$, $\mathcal{S}_{\omega, r}(e_d) = \left\{(\tilde{z}, z^d) \in\mathbb{R}^d : \omega(|\tilde{z}|) < z^d < \omega(r),\,\,\, |\tilde{z}| < r \right\}$, $\tilde{z}\in\mathbb{R}^{d-1}$.
\end{definition}

Note that the condition \textit{\textbf{(W1)}} is equivalent to the following condition
\begin{enumerate}
\item[\textbf{W2}] $\left[\left(B_{3\psi(r)}(x) \backslash \Omega\right) + \mathcal{C}_{\omega, r}(\xi_x)\right] \cap \left( B_{2\psi(r)}(x) \cap \Omega\right) = \varnothing$.
\end{enumerate}
In fact, by virtue of symmetric of \textit{\textbf{(W1)}} and \textit{\textbf{(W2)}} with respect to changing $\Omega$ by its complement it is sufficient to check the implication \textit{\textbf{(W2)}} $\Rightarrow$ \textit{\textbf{(W1)}}. Otherwise
$$\exists y \in \left(B_{3\psi(r)}(x) \cap \Omega \right): B_{2\psi(r)} \cap \left( y - \mathcal{C}_{\omega, r}(\xi_x) \right) \not\subset \Omega$$
and hence there exists a point $z \in \partial \Omega \cap \left( y - \mathcal{C}_{\omega, r}(\xi_x) \right)$. Equivalently this means that $y \in z + \mathcal{C}_{\omega, r}(\xi_x)$; by \textit{\textbf{(W2)}} this inclusion leads to a contradiction: $y \in M \backslash \Omega$.
\begin{remark}
In the definition above we do not assume that $\omega(0) = 0$. All the propositions below are valid without this assumption if it is not stated explicitly.
\end{remark}

For a matrix $A \in C^{0,1}(\bar{\Omega})$, $\Omega \subset \mathbb{R}^d$ define the norm:
$$
\| A \|_{C^{0,1}(\bar{\Omega})} \stackrel{\mathrm{def}}{=} \left\| \left| A \right|_2\right\|_{L_\infty(\Omega)} +  \left\| \max_j \left| \partial_j A \right|_2 \right\|_{L_{\infty}(\Omega)}, \,\, \left| A \right|_2 \stackrel{\mathrm{def}}{=} \sqrt{\tilde{r}(A^t A)},
$$
where $\tilde{r}$ is the spectral radius. Additionally we denote $\mathcal{B}_{3\rho}(y) \stackrel{\mathrm{def}}{=} \chi_y^{-1}(B_{3\rho}(\chi_y(y))$.

We say that an atlas $\mathfrak{W} = \{ (W_y,\chi_y) \}_{y\in M}$ is $(\rho, \vartheta)$--technical, $\rho, \vartheta > 0$, if
\begin{enumerate}
\item[\textit{\textbf{W3}}] $\forall y \in M \Rightarrow \mathcal{B}_{3\rho}(y) \subset W_y,$
\item[\textit{\textbf{W4}}] For every chart $(W,\chi) \in \mathfrak{W}$, $C^{0,1}$-norm\, $\textbf{G}\circ \chi^{-1}$\, does not exceed $\vartheta$, and $L_\infty$-norm of \,$|\textbf{G}\circ \chi^{-1} |_2$\, can be estimated from below by $\vartheta^{-1}$. 
\item[\textit{\textbf{W5}}] There exists a finite subatlas $\mathcal{U}$ of the atlas for $M$ such that for all the transition functions from $U \in \mathcal{U}$ into $W \in \mathfrak{W}$ and their inverses $C^{0,1}$-norm of the Jacobi matrix $\left( \frac{\partial x^{i'}}{\partial x^i}\right)$ does not exceed $\vartheta$.
\end{enumerate}

\begin{definition}
An open subset $\Omega \subset M$ satisfies the uniform $\omega$--cusp condition with parameters $(r,\vartheta)$ if there exists $(\psi(r), \vartheta)$--technical atlas $\mathfrak{W} = \{(W_y, \chi_y)\}_{y \in M}$ such that for arbitrary $y \in M$ the open set $\chi_y(\Omega \cap \mathcal{B}_{3\psi(r)}(y))$ satisfies the $\omega$--cusp condition with parameter $r$ at the point $\chi_y(y)$. This class will be denoted $\mathcal{W}^{\omega}_{r,\vartheta}$.
\end{definition}

\begin{definition}
\label{comega}
Boundary $\partial \Omega$ of a domain $\Omega \subset M$ is of class $C^{0,\omega}$ if there exists such a subatlas $\mathcal{U} = \{(U, \kappa_U) \}$ for $M$ that nonempty $\kappa_U (\partial \Omega \cap U)$ can be represented by a graph of continuous function $g_U$ with modulus of continuity not exceeding $C_U \omega$, $C_U \in \mathbb{R}_+$, $\omega(0) = 0$, where the intersection of $\kappa_U(\Omega \cap U)$ and off-graph is empty.
\end{definition}

Domains $\Omega \subset M$ with boundaries locally representable as graphs of continuous functions we call domains of $C$--class. In the case of compact $M$ by virtue of Cantor theorem one has: for any domain $\Omega \subset M$ there exists such a positive semi-additive function $\omega_\Omega$, $\omega_\Omega(0) = 0$, that $\partial \Omega \in C^{0,\omega_\Omega}$. 

\begin{proposition}
\label{classif}
For arbitrary domain $\Omega \subset M$ with boundary of the class $C^{0,\omega_\Omega}$, $\omega_\Omega(0) = 0$, there exists a class $\mathcal{W}^{C \omega_\Omega}_{r,\vartheta}\ni \Omega$, where $C \equiv \mathrm{const} > 0$.
\end{proposition}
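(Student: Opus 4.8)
The plan is to pass from the graph description of $\partial\Omega$ furnished by Definition \ref{comega} to the one-sided inclusions \textbf{(W1)} of Definition \ref{W_def1}, the geometric point being that a subgraph domain automatically contains translated cusps whose opening is governed by the modulus of continuity of the graph, and that this inclusion (unlike the graph property itself) survives a bi-Lipschitz change of coordinates up to a change of constants.

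First, by compactness of $M$ I would pass to a finite subatlas $\mathcal{U} = \{(U,\kappa_U)\}$ of Definition \ref{comega}, set $C_0 := \max_U C_U$, and fix (via a Lebesgue number) a radius $\rho_0 > 0$ such that every set $\mathcal{B}_{3\psi(r)}(y)$ meeting $\partial\Omega$ lies in a single chart $U$ on which $\kappa_U(\partial\Omega\cap U)$ is the graph $\{z^d = g_U(\tilde z)\}$ of a function with modulus of continuity at most $C_0\omega_\Omega$ and $\kappa_U(\Omega\cap U) = \{z^d < g_U(\tilde z)\}$, provided $\psi(r) \leq \rho_0$. In such a chart the model inclusion is checked directly: if $p = (\tilde p, p^d)$ satisfies $p^d < g_U(\tilde p)$ and $z = (\tilde z, z^d) \in \mathcal{C}_{C_0\omega_\Omega, r}(e_d)$, then, using $g_U(\tilde p - \tilde z) \geq g_U(\tilde p) - C_0\omega_\Omega(|\tilde z|)$ together with $z^d > C_0\omega_\Omega(|\tilde z|)$ on $\mathcal{S}_{C_0\omega_\Omega,r}$ and $z^d \geq C_0\omega_\Omega(r) \geq C_0\omega_\Omega(|\tilde z|)$ on $\mathcal{F}_{C_0\omega_\Omega,r}$ (where $|\tilde z| < r$ since $|z| < \psi(r)$ and $z^d \geq C_0\omega_\Omega(r)$), one gets $p^d - z^d < g_U(\tilde p - \tilde z)$, i.e. $p - z$ again lies strictly under the graph. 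Hence $\kappa_U(\Omega\cap U)$ satisfies \textbf{(W1)} with the cusp $\mathcal{C}_{C_0\omega_\Omega, r}(e_d)$; equivalently, $\Omega$ contains, near each of its boundary points, a whole translated cusp. For $y$ whose ball avoids $\partial\Omega$ the condition \textbf{(W1)} (resp. \textbf{(W2)}) is trivial, since the cusp is contained in $B_{\psi(r)}(0)$ and the relevant displaced set is cut down to $B_{2\psi(r)}(x)\subset\Omega$ (resp. to the complement).

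Next I would produce a $(\psi(r),\vartheta)$-technical atlas $\mathfrak{W} = \{(W_y,\chi_y)\}$. Since $(M,g)$ is $C^{1,1}$ and compact, for any $\vartheta > 1$ there is $\rho_1 > 0$ so that, whenever $\psi(r)\leq\rho_1$, one may cover $M$ by charts in which the coordinate metric $\mathbf{G}\circ\chi_y^{-1}$ is $\vartheta$-close to the Euclidean one (a rescaling of geodesic normal coordinates achieves this, the $C^{1,1}$ metric having coordinate coefficients with bounded first derivatives), each chart containing $\mathcal{B}_{3\psi(r)}(y)$ and having $C^{0,1}$-controlled transitions to and from $\mathcal{U}$; this is exactly \textbf{(W3)}--\textbf{(W5)}. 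The remaining, and I expect principal, difficulty is to transport the cusp inclusion from the graph chart $\kappa_U$ to the technical chart $\chi_y$ when $\mathcal{B}_{3\psi(r)}(y)$ meets $\partial\Omega$. Writing $T = \kappa_U\circ\chi_y^{-1}$, which is bi-Lipschitz with constant $\vartheta$, and $L$ for its affine (derivative) approximation at $\chi_y(y)$, the $C^{1,1}$ bound makes the deviation $T - L$ of order $O(\psi(r)^2)$ on $\mathcal{B}_{3\psi(r)}(y)$. I would then take $\xi_{\chi_y(y)}$ to be the unit vector along $L^{-1}(-e_d)$ and show that for each translate $q - \mathcal{C}_{C_0\omega_\Omega, r}(e_d)\subset\kappa_U(\Omega\cap U)$ the preimage $T^{-1}\!\left(q - \mathcal{C}_{C_0\omega_\Omega, r}(e_d)\right)$ contains a translated cusp $T^{-1}(q) - \mathcal{C}_{C\omega_\Omega, r}(\xi_{\chi_y(y)})$ with an enlarged constant $C$; semi-additivity of $\omega_\Omega$ absorbs the bi-Lipschitz dilation (so that $\omega_\Omega(\vartheta t)\leq\lceil\vartheta\rceil\,\omega_\Omega(t)$), while the $O(\psi(r)^2)$ nonlinear remainder is dominated, for $r$ small, by the vertical margin $\sim\omega_\Omega(r)$ of the cusp. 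This is the crux: because $\omega_\Omega$ need not be Lipschitz the graph may be arbitrarily steep and an ``image of a graph is a graph'' argument fails, whereas the robust one-sided inclusion \textbf{(W1)} (equivalently \textbf{(W2)}) is preserved up to constants under bi-Lipschitz-near-affine maps.

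Assembling the transported inclusions over all $y$ gives a $(\psi(r),\vartheta)$-technical atlas witnessing that $\chi_y(\Omega\cap\mathcal{B}_{3\psi(r)}(y))$ satisfies the $\omega$-cusp condition with modulus $C\omega_\Omega$ at $\chi_y(y)$, i.e. $\Omega\in\mathcal{W}^{C\omega_\Omega}_{r,\vartheta}$ for the resulting constant $C$, any fixed $\vartheta>1$, and any $r$ with $\psi(r)\leq\min(\rho_0,\rho_1)$, which is the assertion.
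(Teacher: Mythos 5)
Your in-chart verification (second paragraph) is correct, and it is in fact the heart of the paper's own proof: the paper also passes to a finite graph atlas and observes that, because $\kappa_U(\Omega\cap U)$ lies on one side of a graph with modulus of continuity $\le C_U\omega_\Omega$, condition \textbf{(W2)} holds in those very coordinates with $\omega = C\omega_\Omega$, $C=\max_U C_U$. Where you diverge — and where a genuine gap appears — is the third paragraph. You read \textbf{(W4)} as requiring the coordinate metric to be $\vartheta$-close to Euclidean for a prescribed $\vartheta>1$, which forces you to introduce a second (near-normal) atlas and then transport the cusp inclusion through the transition maps $T=\kappa_U\circ\chi_y^{-1}$. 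But \textbf{(W4)} only demands an upper $C^{0,1}$ bound and a lower nondegeneracy bound, with $\vartheta$ chosen \emph{after} the atlas; the paper simply takes the technical atlas to \emph{be} the graph atlas, shrunk to $U'\Subset U$, with $(W_y,\chi_y)=(U',\kappa_U)$ and $r$ fixed by $\psi(r)\le\frac{1}{100}\min_U\mathrm{dist}(\partial\kappa_U U,\kappa_U U')$; then \textbf{(W3)} holds by construction and \textbf{(W4)}--\textbf{(W5)} hold for some finite $\vartheta$ because only finitely many coordinate diffeomorphisms occur. No transport between coordinate systems is ever needed.

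The transport step itself is not merely superfluous; as written it fails in a case the proposition must cover, namely $\omega_\Omega\equiv 0$ (the paper stresses immediately after the proposition that on manifolds the class $C^{0,0(\cdot)}$ can be nonempty). There $C\omega_\Omega\equiv 0$ for every constant $C$, so ``enlarging the constant'' buys nothing, while the nonlinear part of $T$ bends a flat boundary piece into a hypersurface of the shape $z^d\sim|\tilde z|^2$; a domain bounded by such a paraboloid violates the $0$-cusp condition outright (take $p=(\tilde p,|\tilde p|^2-\epsilon)$ just below the graph and subtract the horizontal vector $(\tilde p,0)\in\mathcal{C}_{0,r}$), so the $O(\psi(r)^2)$ remainder cannot be ``dominated by the vertical margin,'' which is zero. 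Even when $\omega_\Omega\not\equiv 0$ your argument is incomplete: the linear part $L$ of $T$ shears the cusp, so its image contains only a cusp of modulus roughly $C\,\omega_\Omega(Ct)+\delta t$, and absorbing the additive term $\delta t$ into $C\omega_\Omega(t)$ requires the lower bound $\omega_\Omega(t)\ge c\,t$, which does follow from semi-additivity when $\omega_\Omega\not\equiv 0$ but which you never state — you invoke semi-additivity only for the dilation estimate $\omega_\Omega(\vartheta t)\le\lceil\vartheta\rceil\,\omega_\Omega(t)$. The fix is to discard the auxiliary atlas and the transport altogether and, as in the paper, let the (shrunk) graph charts themselves serve as the technical atlas, at which point your second paragraph completes the proof.
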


\begin{proof}
Since $M$ is compact we may assume that for a fixed domain $\Omega \subset M$ with boundary of the class $C^{0,\omega_\Omega}$, an atlas $\mathcal{U}$ (in the def \ref{comega}) is finite, sets $\kappa_U(U) \subset \mathbb{R}^d$ are bounded, and the mappings $\kappa_U$ are defined on $\tilde{U} \supset \bar{U}$. We select a new atlas $\{(U', \kappa_{U}) \}$ with the property $U' \Subset U$ and set $r = \psi^{-1}\left[\frac{1}{100} \min_{U} \mathrm{dist}( \partial \kappa_U U, \kappa_U U')\right]$, where
$$
\mathrm{dist} (A,B) = \inf_{a \in A, b \in B} |a- b|.
$$
For every $y \in M$ we choose $U' \in \mathcal{U}$, $y \in U'$, and set $(W_y, \chi_y) = (U', \kappa_U)$. Then there exists such a number $\vartheta > 0$, that $\mathfrak{W} = \{ (W_y, \chi_y) \}_{y\in M}$ is a $(\psi(r), \vartheta)$--technical atlas. In fact, condition \textbf{\textit{(W3)}} is satisfied according to construction while \textbf{\textit{(W4)}} and \textbf{\textit{(W5)}} are fulfilled in view of all the mappings $\chi_y$ are obtained as restrictions of finite number of coordinate diffeomorphisms.

Condition \textbf{\textit{(W2)}} is also valid for $\omega = C \omega_\Omega$, $C = \max_U C_U$, since images of $\Omega$ under coordinate diffeomorphism $\chi_y$ are located no one side with respect to graph of $g_y$.
\end{proof}

Similar claim in the case of Lipschitz boundary can be found in \cite{ChenaisEng}. It should be noted that $\omega \equiv 0$ if $\omega(h) = o(h)$. Nevertheless in the case of manifolds the class $C^{0,0(\cdot)}$ can be nonempty.

\section{Necessary background, constructions and estimates.}

\subsection{Hausdorff convergence.}
\label{Haus_p}

Let $X, Y$ be an arbitrary subsets of a connected metric compact space $(M,d)$. Set $d(x,Y) = \inf_{y\in Y}d(x,y)$ and consider the function $e(X,Y) = \sup_{x \in X} d(x, Y)$. If $X\backslash Y \neq \varnothing$ then
$$
e(X,Y) = \sup_{x \in X\backslash Y} d(x,\partial Y) = e(X\backslash Y, \partial Y).
$$
Next introduce $\check{e}(X,Y) = e(M \backslash Y, M \backslash X)$. In view of $(M \backslash Y) \backslash (M \backslash X) = X \backslash Y$ and $\partial X = \partial (M \backslash X)$ one has
\begin{equation}
\check{e}(X,Y) = \sup_{y \in M \backslash Y} d(y, M \backslash X) = \sup_{x \in X\backslash Y} d(x,\partial X) = e(X\backslash Y, \partial X).
\end{equation}
In terms of blowing $X^{\varepsilon} = \left\{ x\in M\left| \,\,d(x,X) < \varepsilon \right.\right\}$, $\varepsilon > 0$, and contraction $X^{-\varepsilon} = \left\{ x \in X\left| \,\, \right.\right.$ $ \left.\left. \forall z \in M\!\!: \, d(x,z) < \varepsilon \Rightarrow z \in X \right.\right\}$ the basic functions $e$ and $\check{e}$ can be described as follows
$$
e(X,Y) = \inf\left\{ \varepsilon > 0\left| X \subset Y^\varepsilon \right.\right\} ,\,\,\,\, \check{e}(X,Y) = \inf\left\{ \varepsilon > 0\left| X^{-\varepsilon} \subset Y \right.\right\}.
$$
It follows that Hausdorff distance functions can be introduced by the formulas:
\begin{align}
\label{HausClose}
d_{\mathcal{H}}(X, Y) = \max \left\{ e(X,Y), e(Y,X) \right\};\\
\label{HausOpen}
d^{\mathcal{H}}(X, Y) = \max \left\{ \check{e}(X,Y), \check{e}(Y,X) \right\}.
\end{align}
These functions are metrics on the families of closed and open set respectively. It is useful to consider stronger version of Hausdorff metric, namely upper Hausdorff--Pompeiu distance
\begin{equation}
\label{HausFull}
d^{{\mathcal{HP}}}(X,Y) = \max\left\{ d_{\mathcal{H}}(X,Y), d^{\mathcal{H}}(X,Y) \right\}.
\end{equation}

\begin{remark}
Notice fundamental difference between (\ref{HausFull}) and (\ref{HausOpen}). The family of all open subsets in a fixed metric compact space $K$ is compact w.r.t. (\ref{HausOpen}) according to Blaschke theorem (see \cite{Burago}) but w.r.t. (\ref{HausFull}) this family loses compactness property though completeness remains. To see the latter it is sufficient to consider subgraphs of the functions $(2 + \sin nx)$ considered on the closed interval $[0, \pi]$.
\end{remark}

For our purposes (see th. \ref{th_42}) the following minimum of all the distances of the Hausdorff type will be necessary:
$$
d_{{\mathcal{HS}}}(X,Y) = \min\left\{ e(X \Delta Y, \partial Y), e(X \Delta Y, \partial X) ,d_{\mathcal{H}}(X,Y), d^{\mathcal{H}}(X,Y) \right\}
$$

Quantities $e(X,Y)$ and $\check{e}(X,Y)$ give us four nonequivalent ways to measure distances, thus $d_{{\mathcal{HS}}}$ is the weakest quantity defining convergence of sets among those that can be constructed by means of $e(X,Y)$, $\check{e}(X,Y)$, $e(Y,X)$, $\check{e}(Y,X)$.

\subsection{Estimates of distances between solutions.}

For a Hilbert space $V$ and its closed subspaces $V_1$ and $V_2$. Consider the problems:
$$
u_i \in V_i, \,\,\,\,\, \Phi(u_i,v_i) = \langle f, v_i \rangle\,\,\,\,\,\, \forall v_i \in V_i,
$$
where $f\in V'$, and $\Phi$ is a bilinear continuous function on $V$ possessing positive $\alpha, \beta \in \mathbb{R}$ such that
$$
\alpha \|u\|_V^2 \leq \Phi (u,u) \leq \beta \| u\|_V^2\,\,\,\,\,\, \forall u\in V.
$$
By Lax--Milgram lemma solutions $u_i = \mathcal{G}(f;V_i)$ exist and are unique. All the statements of this subsections can be found in  \cite{Savare}. To formulate the following lemma and its corollary we need the standard notation $d_V(v, A)$ for the distance between $v\in V$ and a subset $A \subset V$.
\begin{lemma}
\label{lem_32}
For solutions $u_1$ and $u_2$ the following inequality holds:
\begin{equation}
\label{eq_8}
\| u_1 - u_2 \|_V \leq \sqrt{\frac{\beta}{\alpha}} \left( d_V (u_1, V_1 \cap V_2) + d_V (u_2, V_1 \cap V_2) \right).
\end{equation}
Moreover, if $V^{1,2}$ is a closed subspace, containing $V_1 \cap V_2$ then for $u^{1,2} = \mathcal{G}(f;V^{1,2})$ the inequality
\begin{equation}
\label{eq_9}
\| u_1 - u_2\|_V \leq \sqrt{\frac{\beta}{\alpha}} \left(d_V(u^{1,2}, V_1) + d_V(u^{1,2}, V_2)\right)
\end{equation}
takes place.
\end{lemma}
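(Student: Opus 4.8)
The plan is to transfer the whole argument to the \emph{energy inner product} attached to $\Phi$. The appearance of the factor $\sqrt{\beta/\alpha}$ signals that one should treat $\Phi$ as the symmetric positive form it is in our application; accordingly I would set $\langle u,v\rangle_\Phi := \Phi(u,v)$, which is a genuine inner product on $V$ whose induced norm $\|u\|_\Phi := \sqrt{\Phi(u,u)}$ satisfies $\sqrt{\alpha}\,\|u\|_V \leq \|u\|_\Phi \leq \sqrt{\beta}\,\|u\|_V$. I would carry out both estimates entirely in $\|\cdot\|_\Phi$, writing $d_\Phi(v,A) = \inf_{w\in A}\|v-w\|_\Phi$, and convert to $\|\cdot\|_V$ only at the very end; the two-sided norm comparison, applied once on each side, is exactly what manufactures the constant $\sqrt{\beta/\alpha}$ (lower bound $\sqrt{\alpha}$ on the left-hand norm, upper bound $\sqrt{\beta}$ inside the distances on the right).

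The engine of $(\ref{eq_8})$ is a Galerkin-type orthogonality. Since $V_1\cap V_2$ is a common test space, for every $p\in V_1\cap V_2$ the two defining identities give $\Phi(u_1,p)=\langle f,p\rangle=\Phi(u_2,p)$, hence $\Phi(u_1-u_2,p)=0$; that is, $u_1-u_2$ is $\Phi$-orthogonal to $V_1\cap V_2$. Given arbitrary $p_1,p_2\in V_1\cap V_2$, since $p_1-p_2\in V_1\cap V_2$ I would use this to rewrite $\|u_1-u_2\|_\Phi^2=\Phi\bigl(u_1-u_2,(u_1-p_1)-(u_2-p_2)\bigr)$ and then apply the Cauchy--Schwarz inequality for $\langle\cdot,\cdot\rangle_\Phi$, obtaining $\|u_1-u_2\|_\Phi\leq\|u_1-p_1\|_\Phi+\|u_2-p_2\|_\Phi$. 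Taking the infimum over $p_1,p_2\in V_1\cap V_2$ yields $\|u_1-u_2\|_\Phi\leq d_\Phi(u_1,V_1\cap V_2)+d_\Phi(u_2,V_1\cap V_2)$, and the norm conversion described above gives $(\ref{eq_8})$.

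For $(\ref{eq_9})$ I would exploit that the enlarged space $V^{1,2}$ contains $V_1$ and $V_2$: then for $v\in V_i\subset V^{1,2}$ one has $\Phi(u^{1,2},v)=\langle f,v\rangle=\Phi(u_i,v)$, so $u_i-u^{1,2}$ is $\Phi$-orthogonal to the whole of $V_i$. This means $u_i$ is precisely the $\Phi$-orthogonal projection of $u^{1,2}$ onto $V_i$, whence $\|u_i-u^{1,2}\|_\Phi=\inf_{w\in V_i}\|u^{1,2}-w\|_\Phi=d_\Phi(u^{1,2},V_i)$. The triangle inequality $\|u_1-u_2\|_\Phi\leq\|u_1-u^{1,2}\|_\Phi+\|u^{1,2}-u_2\|_\Phi$ together with the same two-sided comparison then delivers $(\ref{eq_9})$.

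The delicate point, and the one I would pin down first, is the exact role of $V^{1,2}$ in the second estimate. The projection identity $u_i=P^\Phi_{V_i}u^{1,2}$ — on which $(\ref{eq_9})$ rests — requires $V_i\subseteq V^{1,2}$ for each $i$, i.e. $V^{1,2}\supseteq V_1+V_2$; a space merely containing $V_1\cap V_2$ is not enough (taking $V^{1,2}=V_1\cap V_2$ would force the right-hand side to vanish while $u_1\neq u_2$ in general). Once this containment is secured, the remainder is routine: the symmetry and coercivity of $\Phi$ validate Cauchy--Schwarz in $\langle\cdot,\cdot\rangle_\Phi$, and the only bookkeeping is to check that passing the factor $\sqrt{\beta}$ through the infimum defining $d_V$ is legitimate, which it is since the infimum is over a fixed set independent of the norm.
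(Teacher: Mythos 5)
Your proof is correct, and it is essentially the standard argument: the paper itself does not prove Lemma \ref{lem_32} at all but defers to \cite{Savare}, where the estimates are obtained by the same device you use --- Galerkin orthogonality in the energy inner product, Cauchy--Schwarz, the projection identity, and a single two-sided comparison $\sqrt{\alpha}\,\|\cdot\|_V \leq \|\cdot\|_\Phi \leq \sqrt{\beta}\,\|\cdot\|_V$ which manufactures the constant $\sqrt{\beta/\alpha}$. One caveat you flag only in passing deserves emphasis: the Cauchy--Schwarz step for $\langle\cdot,\cdot\rangle_\Phi$ and the identity $u_i = P^\Phi_{V_i}u^{1,2}$ require $\Phi$ to be symmetric, which the abstract setup of this subsection never states. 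It does hold in the paper's application (the section $\textbf{A}$ is symmetric), and the constant $\sqrt{\beta/\alpha}$ is precisely the symmetric-case (C\'{e}a) constant --- for a nonsymmetric coercive form one would only get $\beta/\alpha$ with $\beta$ a full continuity constant --- so your reading is the intended one.

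Your ``delicate point'' about $V^{1,2}$ is a genuine catch, not a quibble. As printed, inequality (\ref{eq_9}) is false: with $V^{1,2} = V_1 \cap V_2$ the right-hand side vanishes while the left need not. Concretely, take $V = \mathbb{R}^2$ with $\Phi$ the Euclidean inner product, $V_i = \mathrm{span}(e_i)$, $f = e_1 + e_2$; then $u_1 = e_1 \neq e_2 = u_2$, yet $u^{1,2} = 0$ lies in both $V_1$ and $V_2$, so $d_V(u^{1,2},V_1) + d_V(u^{1,2},V_2) = 0$. The correct hypothesis is the one you identify: $V^{1,2}$ must contain $V_1 \cup V_2$ (equivalently $V_1 + V_2$), which is exactly what makes each $u_i$ the $\Phi$-orthogonal projection of $u^{1,2}$ onto $V_i$. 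The paper itself implicitly uses this reading: Corollary \ref{cor_33} explicitly requires $V^{2,1}$ to contain $V_1 \cup V_2$, and in the proof of Theorem \ref{th_42} the lemma is invoked with $V^{1,2} = V_{\mathcal{O}_{\varepsilon}(\Omega_1)}$, $\varepsilon = e(\Omega_2,\Omega_1)$, a space containing both $V_{\Omega_1}$ and $V_{\Omega_2}$. So ``containing $V_1 \cap V_2$'' in the statement should be read as a typo for ``containing $V_1 \cup V_2$'', and with that correction your proof is complete.
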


\begin{corollary}
\label{cor_33}
The estimate (\ref{eq_9}) takes the form
$$
\| u_1 - u_2\|_V \leq \frac{\beta}{\alpha} \left(d_V(u^{1,2}, V_1) + d_V(u^{2,1}, V_2)\right)
$$
where $u^{2,1} = \mathcal{G}(f;V^{2,1})$ and $V^{2,1}$ contains $V_1 \cup V_2$.
\end{corollary}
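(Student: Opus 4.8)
The plan is to refine the triangle-inequality argument underlying Lemma~\ref{lem_32} by threading \emph{both} auxiliary solutions through the estimate simultaneously. The decisive structural fact is that each of $V^{1,2}$ and $V^{2,1}$ contains $V_1\cup V_2$, so that $V_1,V_2\subseteq V^{1,2}\cap V^{2,1}$. Two consequences follow. First, since $V_1\subseteq V^{1,2}$ and $V_2\subseteq V^{2,1}$, the solutions $u_1,u_2$ are the Galerkin projections of $u^{1,2}$ onto $V_1$ and of $u^{2,1}$ onto $V_2$, i.e.\ $\Phi(u^{1,2}-u_1,v_1)=0$ and $\Phi(u^{2,1}-u_2,v_2)=0$ for all $v_1\in V_1$, $v_2\in V_2$. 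Second, the difference $u_1-u_2$ lies in $V^{1,2}\cap V^{2,1}$ and is therefore an admissible test element in each auxiliary problem, whence $\Phi(u^{1,2},u_1-u_2)=\langle f,u_1-u_2\rangle=\Phi(u^{2,1},u_1-u_2)$, that is $\Phi(u^{1,2}-u^{2,1},u_1-u_2)=0$.

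Granting this, I would write $u_1-u_2=(u_1-u^{1,2})+(u^{1,2}-u^{2,1})+(u^{2,1}-u_2)$ in the first slot of $\Phi(u_1-u_2,u_1-u_2)$; the middle contribution vanishes by the cancellation just recorded, leaving
$$
\Phi(u_1-u_2,u_1-u_2)=\Phi(u_1-u^{1,2},u_1-u_2)+\Phi(u^{2,1}-u_2,u_1-u_2).
$$
Applying coercivity on the left and continuity on the right and dividing by $\|u_1-u_2\|_V$ already yields $\|u_1-u_2\|_V\le(\beta/\alpha)\bigl(\|u_1-u^{1,2}\|_V+\|u^{2,1}-u_2\|_V\bigr)$. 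Since $\Phi$ is symmetric, it is cleaner to pass to the energy norm $\|u\|_\Phi=\Phi(u,u)^{1/2}$: the projections identified above make $\|u_1-u^{1,2}\|_\Phi$ and $\|u^{2,1}-u_2\|_\Phi$ \emph{equal} to the energy distances from $u^{1,2}$ to $V_1$ and from $u^{2,1}$ to $V_2$, and the energy Cauchy--Schwarz inequality turns the displayed identity into $\|u_1-u_2\|_\Phi\le \operatorname{dist}_\Phi(u^{1,2},V_1)+\operatorname{dist}_\Phi(u^{2,1},V_2)$. The elementary equivalences $\sqrt{\alpha}\,\|\cdot\|_V\le\|\cdot\|_\Phi\le\sqrt{\beta}\,\|\cdot\|_V$ then give $\|u_1-u_2\|_V\le\sqrt{\beta/\alpha}\,\bigl(d_V(u^{1,2},V_1)+d_V(u^{2,1},V_2)\bigr)$, and since $\beta\ge\alpha$ this is dominated by the asserted bound with constant $\beta/\alpha$.

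The one genuinely load-bearing step is the cross-term cancellation $\Phi(u^{1,2}-u^{2,1},u_1-u_2)=0$: this is exactly where the containments $V^{1,2},V^{2,1}\supseteq V_1\cup V_2$ are used, since they are precisely what makes $u_1-u_2$ a legitimate test function for both auxiliary problems at once. Everything on either side of it—identifying the Galerkin projections and carrying out the coercivity/continuity (or energy Cauchy--Schwarz) estimate—is routine; the only bookkeeping to watch is that the two residual terms pair $u^{1,2}$ with $V_1$ and $u^{2,1}$ with $V_2$, which is what produces the asymmetric right-hand side of Corollary~\ref{cor_33}.
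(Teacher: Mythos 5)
Your proof is correct, but there is nothing in the paper to compare it against step by step: the paper proves neither Lemma~\ref{lem_32} nor this corollary, deferring the whole subsection to Savar\'e--Schimperna \cite{Savare}. Your argument is therefore a genuine self-contained substitute. Its engine --- the observation that $u_1-u_2$ lies in $V^{1,2}\cap V^{2,1}$ and is hence an admissible test function for both auxiliary problems, killing the cross term $\Phi(u^{1,2}-u^{2,1},u_1-u_2)$, combined with the identification of $u_1$ and $u_2$ as the $\Phi$-orthogonal projections of $u^{1,2}$ onto $V_1$ and of $u^{2,1}$ onto $V_2$ --- is exactly the right mechanism for a mixed estimate of this shape, and it even delivers the sharper constant $\sqrt{\beta/\alpha}$, which implies the stated bound since $\alpha\le\beta$.

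Two points should be made explicit. First, you quietly corrected the hypotheses: Lemma~\ref{lem_32} literally asks only that $V^{1,2}\supseteq V_1\cap V_2$, under which (\ref{eq_9}) is false as stated (take $V^{1,2}=V_1\cap V_2$; then $u^{1,2}\in V_1\cap V_2$ and the right-hand side vanishes), and your cancellation would also be unavailable. The reading you adopt, $V^{1,2},V^{2,1}\supseteq V_1\cup V_2$, is the one the paper actually uses in Theorem~\ref{th_42} (there $V^{1,2}=V_{\mathcal{O}_{\varepsilon}(\Omega_1)}$ with $\varepsilon=e(\Omega_2,\Omega_1)$, which contains both $V_{\Omega_1}$ and $V_{\Omega_2}$), so this is the intended statement. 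Second, your passage to the energy norm requires $\Phi$ to be symmetric, which the subsection never assumes; it does hold for the form the paper feeds into this corollary, $\Phi(u,v)=\int_M\textbf{A}(\nabla u,\nabla v)\,d\mu$ with $\textbf{A}$ symmetric (and in Theorem~\ref{th_42} even $\alpha=\beta=1$), and the constant $\sqrt{\beta/\alpha}$ in Lemma~\ref{lem_32} already presupposes it. But be aware that without symmetry your fallback estimate --- the display bounding $\|u_1-u_2\|_V$ by $(\beta/\alpha)\left(\|u_1-u^{1,2}\|_V+\|u^{2,1}-u_2\|_V\right)$ --- does not finish the job: those are norms of differences, not distances to subspaces, and converting $\|u_1-u^{1,2}\|_V$ into $d_V(u^{1,2},V_1)$ via C\'ea's lemma costs another factor $\beta/\alpha$, landing at $(\beta/\alpha)^2$. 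So state the symmetry assumption explicitly; with it, your proof is complete and slightly stronger than the claim.
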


\subsection{Lemma about perturbation of eigenvalues.}

Let $O \subsetneq (M \backslash \partial M)$ be open non-void and $M_O \stackrel{\mathrm{def}}{=} M \backslash \overline{O}$. The assumption $\Omega \subset M_O$ will be assumed for every domain considered below. Let $\mathrm{p} = \mathrm{p}_{M_O}$ denote Friedrichs constant (i.e. $\inf\left\{\mathrm{p} \, \left| \, \| u \|^2_{L_2(M_O)} \leq \mathrm{p} \,\,\right.\| u \|^2_{\SobSl{M_O}} \right\}$) and
\begin{equation}
\label{norms}
\| u \|^2_{V_\Omega} \stackrel{\mathrm{def}}{=} \int_{\Omega} \textbf{A} (\nabla u , \nabla u) \, d\mu, \,\,\,\,\,  \|u\|^2_{L_\Omega} \stackrel{\mathrm{def}}{=} \mathrm{p} \int_\Omega |u|^2 d\mu,
\end{equation}
be the norms in the spaces $V_\Omega = \Sob(\Omega)$ and $L_\Omega = L_2(\Omega)$ respectively with the standard norms in these spaces defined by the formulae:
\begin{equation}
\label{norms_}
\| u \|^2_{\SobSl(\Omega)} \stackrel{\mathrm{def}}{=} \int_{\Omega} \textbf{G} (\nabla u , \nabla u)  d\mu; \,\,\,\,\,\,  \|u\|^2_{L_2(\Omega)} \stackrel{\mathrm{def}}{=} \int_\Omega |u|^2 d\mu.
\end{equation}
In addition, we denote $V = V_{M_O}$, $L = L_{M_O}$.

Again by Lax-Milgram lemma it follows that problem (\ref{ProbDir}) possesses unique colution $u_f \stackrel{\mathrm{def}}{=} \mathcal{G}(f; V_\Omega) \in V_\Omega$ for arbitrary $f \in V'$; thus
\begin{equation}
\label{Lax}
\| \mathcal{G}(f;V_\Omega) \|_V \leq \alpha^{-1} \| f \|_{V'} \,\,\,\, \forall f\in V'.
\end{equation}

Let pairs $(u_n^{(1)}, \lambda_n^{(1)})$, $(u_n^{(2)}, \lambda_n^{(2)})$ be solutions to the eigenvalue problem (\ref{SpecDir}) for domains $\Omega_1$ and $\Omega_2$ respectively; enumeration $\{\lambda^{(i)}_n\}$ in ascending order is meant. Denote by $P_{\Omega_1}: V \to V_{\Omega_1}$ the orthogonal projection $V$ onto $V_{\Omega_1}$ and set $S_n^{(1)} = \mathrm{span} ( u_1^{(1)}, \ldots, u_n^{(1)} )$. 

\begin{lemma}[cf. \cite{Birkhoff}]
\label{Birkhoff}
Fix $n\in\mathbb{N}$ and assume there are positive numbers $A_n$ and $B_n < \mathrm{p}$ such that for every $u \in S_n^{(1)}$ the inequalities
\begin{equation}
\label{proj}
\| P_{\Omega_1}u - u \|_{V}^2 \leq A_n \|u\|_{L}^2, \,\,\,\,\,\, \| P_{\Omega_1}u - u \|_{L}^2 \leq B_n \|u\|_{L}^2,
\end{equation}
hold; then
$$
 \lambda_n^{(1)} \geq \lambda_n^{(2)} - \frac{A_n}{(\sqrt{\mathrm{p}} - \sqrt{B_n})^2 \mathrm{p}}.
$$
\end{lemma}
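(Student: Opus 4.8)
The plan is to read both eigenvalues through the Courant--Fischer min-max principle
\[
\lambda_n^{(i)} = \min_{\substack{S\subset V_{\Omega_i}\\ \dim S = n}}\ \max_{0\neq u\in S}\frac{\|u\|_V^2}{\|u\|_{L_2(\Omega_i)}^2},
\]
and to establish the equivalent inequality $\lambda_n^{(2)}\le\lambda_n^{(1)}+\frac{A_n}{(\sqrt{\mathrm p}-\sqrt{B_n})^2\mathrm p}$ by exhibiting a good $n$-dimensional trial space for $\Omega_2$. The only raw material at hand is the span $S_n^{(1)}$ of the first $n$ eigenfunctions of $\Omega_1$, on which the Rayleigh quotient is already under control: every $u\in S_n^{(1)}$ satisfies $\|u\|_V^2\le\lambda_n^{(1)}\|u\|_{L_2}^2$. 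Since these functions need not lie in $V_{\Omega_2}$, I would push them there by the orthogonal projection $P$ onto $V_{\Omega_2}$ and take $W = P(S_n^{(1)})$ as the trial space; the hypotheses (\ref{proj}) are precisely the statement that this projection perturbs functions of $S_n^{(1)}$ by a controlled amount, measured in energy (by $A_n$) and in mass (by $B_n$).

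First I would check that $W$ is genuinely $n$-dimensional, i.e.\ that $P$ is injective on $S_n^{(1)}$. This is exactly where the assumption $B_n<\mathrm p$ is spent: the mass bound $\|Pu-u\|_L^2\le B_n\|u\|_L^2$ together with $\|\cdot\|_L^2=\mathrm p\|\cdot\|_{L_2}^2$ keeps $\|Pu\|_L$ bounded below by a positive multiple of the mass of $u$ (the factor being governed by $\sqrt{\mathrm p}-\sqrt{B_n}$), so that $Pu\neq 0$ whenever $u\neq 0$; the same lower bound on the mass of the projected functions is what will sit in the denominator of the Rayleigh estimate. Then, for $w=Pu\in W$, I would bound $\|w\|_V^2$ from above in terms of $\|u\|_V^2$ and the energy defect $\|Pu-u\|_V^2\le A_n\|u\|_L^2$, feed in $\|u\|_V^2\le\lambda_n^{(1)}\|u\|_{L_2}^2$, and divide by the mass lower bound just obtained. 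Taking the maximum over $w\in W$ and then invoking the min-max bound for $\Omega_2$ produces $\lambda_n^{(2)}\le\lambda_n^{(1)}+(\text{correction})$.

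The main obstacle is purely quantitative: making the leading term come out as $\lambda_n^{(1)}$ with coefficient exactly one, while the remainder is exactly $A_n/\big((\sqrt{\mathrm p}-\sqrt{B_n})^2\mathrm p\big)$. A crude triangle-inequality bound $\|w\|_V\le\|u\|_V+\|Pu-u\|_V$ produces a spurious cross term of order $\sqrt{\lambda_n^{(1)}}\sqrt{A_n}$ and a multiplicative $(1-\cdots)^{-2}$ factor in front of $\lambda_n^{(1)}$, neither of which survives in the claimed estimate. I therefore expect the decisive step to be the right choice of inner product for the projection (so that a Pythagoras identity, rather than a triangle inequality, governs one of the two norms) and the careful placement of the Friedrichs constant $\mathrm p$ coming from the normalization $\|u\|_L^2=\mathrm p\|u\|_{L_2}^2$; this is what converts the rough estimate into the clean additive inequality and what forces the precise shape $(\sqrt{\mathrm p}-\sqrt{B_n})^2\mathrm p$ of the denominator. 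The roles of $A_n$ and $B_n$ are asymmetric, $A_n$ entering through the numerator (energy) and $B_n$ through the denominator (mass), and preventing them from contaminating each other is the part demanding the most care.
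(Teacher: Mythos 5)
First, a point of reference: the paper itself contains no proof of this lemma --- it is imported wholesale from \cite{Birkhoff} --- so your sketch has to be measured against the classical Rayleigh--Ritz argument that both the citation and your plan point to. Your scaffolding is the correct one, and it matches how the lemma is actually used in the proof of Theorem \ref{result}: you rightly read the (misprinted) $P_{\Omega_1}$ as the $V$-orthogonal projection onto $V_{\Omega_2}$, take $W=P(S_n^{(1)})$ as an $n$-dimensional trial space, and invoke min--max for $\Omega_2$; the dimension count and the identification of where $A_n$ and $B_n$ must enter are also right.

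However, the proposal stops exactly where the lemma's content begins, and the fix you anticipate cannot close the gap. With the $V$-orthogonal projection you do get Pythagoras in the numerator, $\|Pu\|_V^2=\|u\|_V^2-\|Pu-u\|_V^2\le\lambda_n^{(1)}\|u\|_{L_2}^2$; but the denominator is then necessarily handled by the triangle inequality, $\|Pu\|_{L_2}\ge\|u\|_{L_2}-\|Pu-u\|_{L_2}$, whose cross term is of order $\sqrt{B_n}\,\|u\|_{L_2}$. This yields a multiplicative estimate of the form $\lambda_n^{(2)}\le\lambda_n^{(1)}\bigl(1-\sqrt{B_n/\mathrm{p}}\bigr)^{-2}$, i.e.\ an additive error of order $\lambda_n^{(1)}\sqrt{B_n}$, not $A_n\bigl((\sqrt{\mathrm{p}}-\sqrt{B_n})^2\mathrm{p}\bigr)^{-1}$. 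The two are not interchangeable: the hypotheses only provide $B_n$ of the same order as $A_n$ (by Friedrichs' inequality one may always take $B_n\le(\mathrm{p}^2/\alpha)A_n$), and in the intended application $A_n\sim\phi(\epsilon)\to0$, so $\sqrt{B_n}\sim\sqrt{A_n}\gg A_n$; your route would therefore only give $|\lambda_n^{(1)}-\lambda_n^{(2)}|\lesssim\sqrt{\omega(\epsilon)+\epsilon}$ in Theorem \ref{result}, destroying the linear modulus which is the point of the paper. Giving Pythagoras to the other norm instead (an $L_2$-orthogonal projection) merely moves the cross term into the numerator, where it becomes $2\sqrt{\lambda_n^{(1)}A_n}$ --- the same defect --- and such a projection need not even map into $V_{\Omega_2}$. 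The actual content of the cited lemma is that the eigenvalue error is \emph{quadratic} in the eigenfunction defect, i.e.\ both cross terms must be made quadratically small simultaneously. The classical mechanism is the expansion $\Phi(w,w)-\lambda\|w\|_{L_2}^2=[\Phi(u,u)-\lambda\|u\|_{L_2}^2]+2[\Phi(u,w-u)-\lambda(u,w-u)_{L_2}]+[\Phi(w-u,w-u)-\lambda\|w-u\|_{L_2}^2]$ with $w=Pu$, where the middle bracket is annihilated, up to terms quadratic in the defects, by the eigenvalue equation satisfied by $u\in S_n^{(1)}$; the genuinely delicate point in the present two-domain setting is that $w-u=Pu-u\notin V_{\Omega_1}$, so the eigen-equation cannot be tested against it directly. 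That orthogonality/consistency step --- the one which makes $A_n$, rather than $\sqrt{A_n}$ or $\sqrt{B_n}$, appear in the final bound --- is the missing idea in your plan.
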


\subsection{Local estimate necessary for resolvent convergence}

Arguments in this section are based on a generalization of the technique proposed in \cite{Savare}. We start with the following

\begin{lemma}[cf. \cite{EvansGar} p. 24, \cite{Ambrosio} p. 49]
\label{Bez}
Let $M$ be a compact manifold, $d$ a metric associated with a Riemannian structure on $M$, $\mathcal{F}$ a family of opened balls with $\inf_{B\in\mathcal{F}} \mathrm{diam} B > 0$ and $A$ the set of its centers. Then there exists a finite subfamily $\{B_{r_j}(a_j)\}_{j=1}^{J} \subset \mathcal{F}$ such that
$$
A \subset \cup_{j=1}^{J} B_{r_j}(a_j),
$$
with $\{B_{r_j/3}(a_j)\}_{j=1}^{J}$ being disjoint.
\end{lemma}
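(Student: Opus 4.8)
\section*{Proof proposal}

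The plan is to carry out a Vitali-type greedy selection organised by dyadic generations of radii, and to extract finiteness from the compactness of $M$. First I would record two elementary consequences of compactness. Since any ball of radius exceeding $\mathrm{diam}\, M$ equals $M$, radii may be assumed bounded above by $D := \mathrm{diam}\, M$; and the hypothesis $\inf_{B\in\mathcal{F}}\mathrm{diam}\, B > 0$ furnishes a lower bound $\delta := \tfrac12 \inf_{B\in\mathcal{F}}\mathrm{diam}\, B > 0$ on the radius of every $B \in \mathcal{F}$. Consequently the generations $\mathcal{F}_k := \{ B_r(a)\in\mathcal{F} : D/2^{k} < r \le D/2^{k-1}\}$, $k = 1, 2, \ldots$, are empty once $D/2^k \le \delta$, so only finitely many of them are nonvoid.

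Next I would select the subfamily inductively over $k$. Having chosen the balls from generations $1, \ldots, k-1$, I would adjoin to them a collection of balls of $\mathcal{F}_k$ that is maximal with respect to the property that all the chosen $\tfrac13$-shrinkings $B_{r_j/3}(a_j)$ are pairwise disjoint; the disjointness of the shrinkings then holds by construction. Finiteness of the whole selected family is the point where compactness of $M$ enters essentially: two disjoint shrinkings satisfy $\mathrm{dist}(a_i, a_j) \ge r_i/3 + r_j/3 \ge 2\delta/3$, because $M$, being a compact Riemannian manifold, is a geodesic space, so the centers form a $(2\delta/3)$-separated subset of the totally bounded space $M$ and hence are finite in number (equivalently, each shrinking has $\mu$-measure bounded below by a constant depending only on $\delta$ and $M$, while $\mu(M) < \infty$). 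In particular the inductive selection terminates and yields a finite family $\{B_{r_j}(a_j)\}_{j=1}^{J}$.

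It remains to verify that the original balls cover $A$, and here the factor $3$ is exactly what is needed. Let $a\in A$ and $B_r(a)\in\mathcal{F}_k$. If this ball was not selected, then by maximality its shrinking $B_{r/3}(a)$ meets some selected $B_{s/3}(b)$ coming from a generation of index $\le k$; thus $s > D/2^{k} \ge r/2$. The triangle inequality gives $\mathrm{dist}(a,b) < r/3 + s/3$, and since $r < 2s$ this is strictly less than $2s/3 + s/3 = s$, whence $a \in B_s(b)$. Therefore every center is covered by a selected ball. The main obstacle is not the combinatorics, which reproduce the Euclidean argument of the cited references, but the passage to the manifold: one must supply the uniform lower bound guaranteeing finiteness (total boundedness of the compact $M$, or a uniform lower bound on $\mu$ of small balls) and use the length-space property to convert disjointness of open balls into separation of their centers. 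Once these are in place the dyadic bookkeeping delivers the relation $s > r/2$ on which the covering step rests.
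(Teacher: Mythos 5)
The paper offers no proof of this lemma at all --- it simply defers to the standard covering theorems in \cite{EvansGar} and \cite{Ambrosio} --- and your argument is precisely the dyadic Vitali-type greedy selection that those references contain (generations of radii, maximal subfamilies with pairwise disjoint $1/3$-shrinkings, triangle inequality giving $s > r/2$ and hence coverage of centers), so your proposal is correct and matches the intended proof. Moreover, you correctly supply the two points where the manifold setting genuinely needs care and which the citation glosses over: the length-space (Hopf--Rinow) property of a compact Riemannian manifold, needed to convert disjointness of the shrinkings into a $2\delta/3$-separation of their centers, and compactness (total boundedness, or equivalently a uniform positive lower bound on the measure of balls of radius $\delta/3$), needed to conclude that the selected family is finite.
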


\begin{proposition}
\label{prop_36}
Let $X,Y \subset M_O$ be open, atlas $\mathfrak{W}$ be $(\rho,\vartheta)$--technical and $v \in V_Y$. Assume that for every $y \in \Lambda= Y\backslash X$ there exists a vector $\nu(y)$, such that
$$
x\in \chi_y(\mathcal{B}_\rho(y)\backslash X) \Rightarrow (x + \nu(y)) \notin \chi_y\left( Y\cap \mathcal{B}_{3\rho}(y)\right),
$$
and a function $H \in L_1(M)$ satisfies the inequality
$$
\| v^y_{\nu(y)}  - v  \|^2_{V_{\mathcal{B}_{\rho}(y)}} \leq \| H \|_{L_1(\mathcal{B}_{3 \rho}(y))},
$$
for every $y \in \Lambda^{3\rho} = \cup_{y \in \Lambda} \mathcal{B}_{3\rho} (y)$, where $v^y_h = v \circ \chi_y^{-1} \circ (x + h) \circ \chi_y$. Then there exists a function $w \in V_X$ (independent on the choice of $H$) such that
$$
\| w  - v \|_{V}^2 \leq C \| H \|_{L_1(\Lambda^{3 \rho})},
$$
where $C = C(M_O, \vartheta, \rho, \mathcal{A}) \equiv \mathrm{const}$.
\end{proposition}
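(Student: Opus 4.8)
The plan is to build $w$ from $v$ by gluing together, via a partition of unity, the local translates $v^{y}_{\nu(y)}$ that the escape hypothesis forces to vanish on $\mathcal{B}_\rho(y)\setminus X$, and then to control the gluing error by the hypothesis together with a finite-overlap bound. First I would produce the cover: apply Lemma \ref{Bez} to the family $\{\mathcal{B}_\rho(y)\}_{y\in\Lambda}$ of coordinate balls (whose Riemannian radii are comparable by \textbf{\textit{(W4)}}--\textbf{\textit{(W5)}}) to extract a finite subfamily $\{\mathcal{B}_\rho(y_j)\}_{j=1}^{J}$ covering $\Lambda$ with the balls $\{\mathcal{B}_{\rho/3}(y_j)\}$ pairwise disjoint. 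Because $\vartheta$ bounds the distortion of the charts and the measure, disjointness of the one–third balls yields a uniform bound $N=N(\vartheta,\rho,M_O)$ on the number of the $\mathcal{B}_{3\rho}(y_j)$ meeting a given point; this finite overlap, together with $\cup_j\mathcal{B}_{3\rho}(y_j)\subset\Lambda^{3\rho}$, is the device turning sums $\sum_j\|H\|_{L_1(\mathcal{B}_{3\rho}(y_j))}$ into $N\|H\|_{L_1(\Lambda^{3\rho})}$. Subordinate to the cover I take a Lipschitz partition of unity $\{\phi_j\}_{j=1}^{J}$ with $\mathrm{supp}\,\phi_j\subset\mathcal{B}_\rho(y_j)$, $|\nabla\phi_j|\le c/\rho$, and $\sum_{j\ge1}\phi_j\equiv1$ near $\Lambda$, put $\phi_0=1-\sum_{j\ge1}\phi_j$, and set $w=\phi_0 v+\sum_{j=1}^{J}\phi_j\,v^{y_j}_{\nu(y_j)}$.

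Next I would verify $w\in V_X$. The escape condition says that for $x\in\chi_{y_j}(\mathcal{B}_\rho(y_j)\setminus X)$ the shifted point $x+\nu(y_j)$ leaves $\chi_{y_j}(Y\cap\mathcal{B}_{3\rho}(y_j))$; since $v\in V_Y$ vanishes off $Y$, this gives $v^{y_j}_{\nu(y_j)}\equiv0$ on $\mathcal{B}_\rho(y_j)\setminus X$. Hence each summand $\phi_j v^{y_j}_{\nu(y_j)}$ is supported in $X$, while $\phi_0 v$ lives off a neighbourhood of $\Lambda$, where $M_O=(M_O\setminus Y)\cup X$ and $v$ vanishes on $M_O\setminus Y$; so $w$ vanishes off $X$. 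Membership in the closed subspace $V_X$ then follows by a routine density argument: the escape inclusion is open, so $v^{y_j}_{\nu(y_j)}$ is a limit of translates of $C^\infty_c(Y)$–functions whose supports are pushed into $X$, and the cut-offs preserve this.

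For the error, $\sum_{j\ge0}\phi_j\equiv1$ gives $w-v=\sum_{j\ge1}\phi_j d_j$ with $d_j:=v^{y_j}_{\nu(y_j)}-v$, and
$$
\nabla(w-v)=\sum_{j\ge1}\phi_j\nabla d_j+\sum_{j\ge1}(\nabla\phi_j)d_j .
$$
Passing from $\|\cdot\|_V$ to the Riemannian gradient norm costs only the ellipticity constants of \textbf{A} (\textbf{A1}, \textbf{A2}), and $\vartheta$ compares coordinate and Riemannian gradients. The first sum is immediate: using $0\le\phi_j\le1$, the finite overlap $N$, and the hypothesis $\|d_j\|^2_{V_{\mathcal{B}_\rho(y_j)}}\le\|H\|_{L_1(\mathcal{B}_{3\rho}(y_j))}$,
$$
\Big\|\sum_j\phi_j\nabla d_j\Big\|_{L_2}^2\lesssim N\sum_j\|\nabla d_j\|^2_{L_2(\mathcal{B}_\rho(y_j))}\lesssim N^2\,\|H\|_{L_1(\Lambda^{3\rho})} .
$$

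The main obstacle is the cross term $\sum_j(\nabla\phi_j)d_j$, since the hypothesis bounds only gradients of $d_j$, not its $L_2$ size, whereas $|\nabla\phi_j|\sim1/\rho$. Here I would exploit the cancellation $\sum_j\nabla\phi_j\equiv0$: at each point, choosing an active index $j_*$,
$$
\sum_j(\nabla\phi_j)d_j=\sum_j(\nabla\phi_j)\big(v^{y_j}_{\nu(y_j)}-v^{y_{j_*}}_{\nu(y_{j_*})}\big),
$$
so the error is driven by differences of neighbouring translates, not by $v$ itself; each such difference vanishes on $(\mathcal{B}_\rho(y_j)\cap\mathcal{B}_\rho(y_{j_*}))\setminus X$ by the escape property, and its gradient is controlled by $\|\nabla d_j\|+\|\nabla d_{j_*}\|\lesssim\|H\|^{1/2}_{L_1}$. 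A Poincaré-type inequality on $\mathcal{B}_\rho(y_j)$ for these differences then trades the $1/\rho^2$ coming from $|\nabla\phi_j|^2$ against the $\rho^2$ it supplies, reducing the cross term back to $\sum_j\|\nabla d_j\|^2_{L_2}\lesssim\|H\|_{L_1(\Lambda^{3\rho})}$. Making this Poincaré constant uniform in $j$ — equivalently, bounding the $L_2$ mass of the translate–differences on each ball by their gradient energy with a constant depending only on $\vartheta$ and $\rho$ — is the delicate heart of the argument, and it is precisely where the technical-atlas bounds and the $\mathcal{B}_{3\rho}$-versus-$\mathcal{B}_\rho$ slack in the hypothesis are consumed, fixing the dependence $C=C(M_O,\vartheta,\rho,\mathcal{A})$.
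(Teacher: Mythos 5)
Your construction is exactly the paper's: a finite cover of $\Lambda$ from Lemma \ref{Bez}, a Lipschitz partition of unity with a background term, the glued function $w=\phi_0v+\sum_j\phi_j v^{y_j}_{\nu(y_j)}$, and the splitting of $\nabla(w-v)$ into $\sum_j\phi_j\nabla d_j$ and $\sum_j(\nabla\phi_j)d_j$, where $d_j=v^{y_j}_{\nu(y_j)}-v$. Your handling of the first sum and of the overlap bookkeeping is fine, and you correctly located the only real difficulty in the cross term, for which the hypothesis gives no $L_2$ information about $d_j$. The gap is that your proposed resolution of that difficulty does not work. First, the escape condition alone does not force $v^{y_j}_{\nu(y_j)}$ to vanish on $\mathcal{B}_\rho(y_j)\setminus X$: the shifted point is only required to leave $\chi_{y_j}(Y\cap\mathcal{B}_{3\rho}(y_j))$, so if $\nu(y_j)$ is large it may exit $\mathcal{B}_{3\rho}(y_j)$ while remaining inside $Y$, where $v$ need not vanish. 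Second, even when the translates do vanish there (small shifts), the hypotheses contain no lower bound on the measure or capacity of $(\mathcal{B}_\rho(y_j)\cap\mathcal{B}_\rho(y_{j_*}))\setminus X$ --- in the intended application this set is a sliver whose width tends to $0$ --- so no Poincar\'e inequality for these differences can hold with a constant $C(M_O,\vartheta,\rho,\mathcal{A})$. Third, the cancellation identity in any case leaves the term $(\nabla\phi_0)\bigl(v-v^{y_{j_*}}_{\nu(y_{j_*})}\bigr)=-(\nabla\phi_0)d_{j_*}$, which is again of the forbidden form $\rho^{-1}d_j$ on the positive-measure transition zone of $\phi_0$.

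In fact no argument can close this step from the stated hypotheses, because the proposition as literally written is false: take $M_O\subset\mathbb{R}^2$ flat, $Y$ the unit disk, $X=Y\setminus\ell$ with $\ell$ a closed segment through the center, so $\Lambda=\ell$; take $\nu(y)$ a huge vector (the escape condition then holds because the shift exits the disk altogether), and $v\in C^\infty_c(Y)$ with $v\equiv1$ on a neighborhood of $\ell$. Then $v^{y}_{\nu(y)}-v=-v$ is constant on every $\mathcal{B}_\rho(y)$, $y\in\Lambda^{3\rho}$, so $H\equiv0$ is admissible; yet no $w\in V_X$ has $\|w-v\|_V=0$, since $\ell$ has positive capacity. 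So one genuinely needs the companion hypothesis $\|v^{y}_{\nu(y)}-v\|^2_{L_{\mathcal{B}_\rho(y)}}\le\|H\|_{L_1(\mathcal{B}_{3\rho}(y))}$ in the norms (\ref{norms}). This is in effect what the paper does: at the same point its proof writes $\|v-v^{x_j}_{\nu(x_j)}\|_{L_{\mathcal{B}_\rho(x_j)}}\le\|v-v^{x_j}_{\nu(x_j)}\|_{V_{\mathcal{B}_\rho(x_j)}}$, an $L_2$-by-energy bound on balls which likewise does not follow from the stated hypotheses, but which is harmless where the proposition is used: in Lemma \ref{lem_41} the shifts are small translations, and Lemma \ref{lem_37} supplies exactly the missing bound, $\|v^y_h-v\|_{L_2(\mathcal{B}_{2\rho}(y))}\le|h|\,\|\nabla v\|_{L_2(\mathcal{B}_{3\rho}(y))}$ with $|h|^2\|\nabla v\|^2$ dominated by $\|H\|_{L_1}$. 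Once you add that $L_2$ hypothesis, your proof closes immediately --- the cross term is bounded by $\rho^{-2}\sum_j\|d_j\|^2_{L_2(\mathcal{B}_\rho(y_j))}\le(\mathrm{p}\rho^{2})^{-1}\sum_j\|H\|_{L_1(\mathcal{B}_{3\rho}(y_j))}$ --- and the cancellation device, your ``delicate heart,'' becomes unnecessary.
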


\begin{proof}
Let $\mathcal{O}_{R}(y) = \{ z \in M\, | \, \delta(z,y) < R \}$ be the geodesic ball with respect to the metric associated with the Riemannian structure $g$. Applying lemma \ref{Bez} to the family  $\mathcal{F} = \{ \mathcal{O}_{\frac{\rho}{2 \vartheta}} (y) \}_{y \in \Lambda}$ and the set $A = \Lambda$, we get the finite set  $\{x_j\}_{j=1}^{J}$ such that
$$
i \neq j \Rightarrow \mathcal{O}_{\frac{\rho}{6\vartheta}}(x_j) \cap \mathcal{O}_{\frac{\rho}{6\vartheta}}(x_i) = \varnothing,\,\,\,\,\,\Lambda \subset \cup_{j\in J} \mathcal{O}_{\frac{\rho}{2 \vartheta}}(x_j) \subset \cup_{j\in J} \mathcal{B}_{\frac{\rho}{2}}(x_j) ,
$$
$$
\mathcal{B}_{\frac{\rho}{6 \vartheta^2}}(x_i) \cap \mathcal{B}_{\frac{\rho }{6 \vartheta^2}}(x_j) = \varnothing.
$$
Hence
\begin{equation}
\label{eq_star}
J \leq \frac{\mu(M)}{\min_{j} \mu(\mathcal{O}_{\frac{\rho}{6\vartheta}}(x_j))} \leq \frac{\mu(M)}{\inf_{z\in M}\mu(\mathcal{O}_{\frac{\rho}{6\vartheta}}(z))},
\end{equation}
and by virtue of $M$ compact and measure $\mu$ absolutely continuous there exists a point $z\in M$ where (positive) $\mathrm{inf}$ in (\ref{eq_star}) is reached; thus  $J < C_2(M, \vartheta, \rho)$.

The required function $w$ will be built by means of a certain partition of unity. To construct it we introduce the functions
$$
\varphi_j(x) = \min \{1, (3 - 3|\chi_{x_j}(x) - \chi_{x_j}(x_j)| / \rho)^+\}, \,\,\,\, \varphi_0(x) = \min\{ 1, 6\, \delta (x, \Lambda) / \rho\}.
$$
and $\mathfrak{g}(x,\xi) = \textbf{G}_x(\xi,\xi)$. Then one has $0 \leq \varphi_j(x)\leq 1$ and
$$
\mathrm{supp}(\varphi_j) \subset \overline{\mathcal{B}_{\rho}(x_j)},\,\,\,\, \varphi_j|_{\mathcal{B}_{3\rho/4}(x_j)} \equiv 1, \,\,\,\, \mathfrak{g}(x, \nabla \phi_j)^{1/2} \leq C_3(M) \rho^{-1} \textbf{1}_{\mathcal{B}_{\rho}(x_j)}(x),\,\,\,\, j \in \mathbb{Z}_+ \cap [0, J],
$$
where $\textbf{1}_A(x) = 1$ is the indicator of $A \subset M$; thus $1 \leq \sum_{j =0}^{J} \varphi_j \leq 1 + J$. The function $\varkappa_j = \frac{\varphi_j}{\sum_{k = 0}^{J} \varphi_k}$ possess the following properties:
$$
\sum_{ j = 0}^{J} \varkappa_j \equiv 1, \,\,\,\,\, 0\leq \varkappa_j \leq 1, \,\,\, \,\,\mathfrak{g}(x, \nabla \varkappa_j )^{1/2} \leq C_4(M,\vartheta, \rho)/\rho = \frac{C_2(M, \vartheta, \rho)C_3(M)}{\rho}.
$$
If  $w = \sum_{j=0}^{J} \varkappa_j \tilde{v}_j$, $\tilde{v}_j = \textbf{1}_{\mathcal{B}_{2\rho}(x_j)} v^{x_j}_{\nu(x_j)}$, then 
\begin{align*}
\| v - w \|^2_{L} \leq \mathrm{p} \int_M \left|\sum_{j = 0}^{J} \varkappa_j (v - \tilde{v}_j)\right|^2 d\mu \leq \sum_{j = 0}^{J} \mathrm{p} \int_{\mathcal{B}_{\rho}(x_j)} |v - v^{x_j}_{\nu(x_j)}|^2 d\mu = \\
\sum_{j=0}^{J} \| v - v^{x_j}_{\nu(x_j)} \|^2_{L_{\mathcal{B}_{\rho}(x_j)}} \leq \sum_{j=0}^{J} \| v - v^{x_j}_{\nu(x_j)} \|^2_{V_{\mathcal{B}_{\rho}(x_j)}} \leq C_2(M, \vartheta, \rho) \int_{\Lambda^{3\rho}} H\, d\mu,
\end{align*}
Using the symbol $\mathfrak{a}(x,\xi) = \textbf{A}_x(\xi,\xi)$ of operator $\mathcal{A}$ one finally has
\begin{align*}
\| v - w \|^2_{V} = \int_M \mathfrak{a} \left( x, \nabla \sum_{j=0}^{J} \varkappa_j (v - \tilde{v}_j) \right) d\mu \leq \\
2C_2(M, \vartheta, \rho) \sum_{j=0}^{J} \int_M \mathfrak{a}(x,\nabla \varkappa_j) |v - \tilde{v}_j|^2  d\mu + 2\sum_{j = 0}^{J} \int_M \varkappa_j \mathfrak{a}(x, \nabla v - \nabla \tilde{v}_j) d\mu \leq\\
\leq 2 C_4(M, \vartheta, \rho) \sum_{j=0}^{J} \int_{\mathcal{B}_{3\rho}(x_j)} \left(  \frac{C_5(\mathcal{A})}{\mathrm{p}\rho^2} H + H \right) \,d\mu \leq C \| H \|_{L_1(\Lambda^{3 \rho})}.
\end{align*}

\end{proof}

\subsection{Estimates in domains from $\mathcal{W}^{\omega}_{r, \vartheta}$.}

\begin{lemma}[see \cite{Brezis}]
\label{lem_37}
If $v \in H^1(\mathbb{R}^d)$, $x_0 \in \mathbb{R}^d$, the for any $h\in \mathbb{R}^d$, $|h| < \rho,$
$$
\int_{B_{2\rho}(x_0)} |v(x+h) - v(x)|^2 dx \leq |h|^2 \int_{B_{3\rho}(x_0)} |\nabla v(x)|^2 dx
$$
\end{lemma}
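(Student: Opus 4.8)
The plan is to prove the statement first for smooth functions and then pass to the limit by density, which is the standard route for such difference-quotient-type inequalities. So I would begin by taking $v \in C^1_c(\mathbb{R}^d)$ (or $v \in C^\infty$), fix $x_0$ and $h$ with $|h| < \rho$, and for each $x$ write the increment along the segment joining $x$ to $x+h$ using the fundamental theorem of calculus:
\begin{equation*}
v(x+h) - v(x) = \int_0^1 \frac{d}{dt}\, v(x + th)\, dt = \int_0^1 \nabla v(x+th)\cdot h\, dt.
\end{equation*}

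Next I would square this identity and apply the Cauchy--Schwarz inequality in two places. First, pointwise in the integrand $\nabla v(x+th)\cdot h \le |h|\,|\nabla v(x+th)|$; second, to the integral over $t\in[0,1]$, using that $[0,1]$ has unit measure, so that
\begin{equation*}
|v(x+h) - v(x)|^2 \le |h|^2 \int_0^1 |\nabla v(x+th)|^2\, dt.
\end{equation*}
Then I would integrate this over $x \in B_{2\rho}(x_0)$ and swap the order of integration (Tonelli), reducing the claim to controlling $\int_0^1 \int_{B_{2\rho}(x_0)} |\nabla v(x+th)|^2\, dx\, dt$. The inner $x$-integral is a translate of the $\nabla v$ integral: substituting $y = x + th$ moves the domain to $B_{2\rho}(x_0) + th = B_{2\rho}(x_0 + th)$.

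The only real step to check is the containment of domains: since $|th| \le |h| < \rho$ for $t\in[0,1]$, every translated ball $B_{2\rho}(x_0+th)$ is contained in the enlarged ball $B_{3\rho}(x_0)$. Hence for each $t$ the inner integral is bounded by $\int_{B_{3\rho}(x_0)}|\nabla v(y)|^2\,dy$, independent of $t$, and integrating the constant over $t\in[0,1]$ gives exactly the factor $1$. This yields the desired inequality for smooth $v$, with no loss in the constant.

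Finally I would remove the smoothness assumption by density: given $v\in H^1(\mathbb{R}^d)$, approximate it by $v_k\in C^\infty_c(\mathbb{R}^d)$ with $v_k\to v$ in $H^1$, apply the inequality to each $v_k$, and pass to the limit, using that both sides are continuous with respect to the $H^1$-norm (the left side because translation is an isometry on $L_2$, the right side trivially). I expect no genuine obstacle here; the geometric ball-containment $|th|<\rho \Rightarrow B_{2\rho}(x_0+th)\subset B_{3\rho}(x_0)$ is the one place where the specific radii $\rho$, $2\rho$, $3\rho$ enter, and it is exactly what makes the clean constant $1$ (rather than something depending on $\rho$) possible.
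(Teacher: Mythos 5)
Your proof is correct and is essentially the standard argument from the reference the paper cites (Brezis) in lieu of giving its own proof: reduce to smooth compactly supported functions by density, write the increment via the fundamental theorem of calculus, apply Cauchy--Schwarz and Fubini, and use the ball containment $B_{2\rho}(x_0+th)\subset B_{3\rho}(x_0)$ for $|th|<\rho$. Nothing is missing; the density step and the continuity of both sides under $H^1$-convergence are handled correctly.
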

The following claim allows one to find a function $H$ satisfying condition in Proposition~\ref{prop_36}. 

\begin{theorem}
\label{th_38}
Let $Z\subset M$, $Z \in \mathcal{W}^{\omega}_{r,\vartheta}$, $f\in L_2(M)$, $u = \mathcal{G}(f;V_Z)$, $y \in M$, $h = |h| \xi_y$, $|h| < \psi(r)$. Then for every positive $\rho < \psi(r)$ there exists such a number $\tilde{C} = \tilde{C}(M_O, r, \vartheta, \mathcal{A})$ that
$$
\|u - u^y_h\|^2_{V_{\mathcal{B}_{\rho}(y)}} \leq \tilde{C} \|u\|_{V_{\mathcal{B}_{3 \rho}(y)}} \left[  \|u\|_{V_{\mathcal{B}_{3 \rho}(y)}} + \| f \|_{L_{Z \cap \mathcal{B}_{2\rho}(y)}} \right] \cdot |h|.
$$
\end{theorem}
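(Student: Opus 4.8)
The plan is to run a localized Caccioppoli/difference-quotient argument in the single direction $\xi_y$, in the spirit of Savaré, exploiting that the cusp condition makes the $+\xi_y$-translate of $u$ a legitimate competitor. First I would pass to the chart $(W_y,\chi_y)$ and, after a rotation, assume $\xi_y = e_d$; by \textbf{W4}--\textbf{W5} the pulled-back metric, measure and coefficients are comparable to their Euclidean counterparts with constants depending only on $\vartheta$, so it suffices to prove the inequality for the Euclidean image $\Omega = \chi_y(Z\cap\mathcal B_{3\psi(r)}(y))$, which satisfies the $\omega$-cusp condition at the origin with axis $e_d$. The decisive preliminary observation is that for $0<|h|<\psi(r)$ the vector $h=|h|e_d$ lies on the axis of $\mathcal C_{\omega,r}(e_d)$, hence by \textbf{W2} adding $h$ to the complement of $\Omega$ keeps one in the complement; consequently the zero-extended translate $u^y_h$ is supported in $\bar\Omega$ throughout the ball and therefore (locally) belongs to $V_Z$. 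This is exactly what allows $u^y_h$, and the difference $\zeta = u - u^y_h$, to enter the variational formulation as test functions.

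With this in hand I would choose a cut-off $\eta$, equal to $1$ on $\mathcal B_\rho(y)$ and supported in $\mathcal B_{2\rho}(y)$, and start from the energy identity obtained by expanding $\Phi(\zeta,\eta^2\zeta)$: the quantity $\int\eta^2\mathfrak a(x,\nabla\zeta)\,d\mu$, which dominates $\|\zeta\|^2_{V_{\mathcal B_\rho(y)}}$ by ellipticity \textbf{A1}, equals $\Phi(\zeta,\eta^2\zeta)$ minus the commutator $2\int\eta\zeta\,\mathbf A(\nabla\zeta,\nabla\eta)\,d\mu$. Writing $\Phi(\zeta,\eta^2\zeta)=\Phi(u,\eta^2\zeta)-\Phi(u^y_h,\eta^2\zeta)$ and invoking the weak equation for $u$ (legitimate since $\eta^2\zeta\in V_Z$) replaces the first piece by $\int f\,\eta^2\zeta\,d\mu$; Cauchy--Schwarz together with Lemma~\ref{lem_37}, giving $\|\zeta\|_{L}=\|u-u^y_h\|_{L}\lesssim|h|\,\|u\|_{V_{\mathcal B_{3\rho}(y)}}$, turns this into a contribution of the required form $\|f\|_{L_{Z\cap\mathcal B_{2\rho}(y)}}\cdot\|u\|_{V_{\mathcal B_{3\rho}(y)}}\cdot|h|$. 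For the term $\Phi(u^y_h,\eta^2\zeta)$ I would compare $\mathbf A$ with its shift $\mathbf A(\cdot+h)$: the coefficient defect is $O(|h|)$ by \textbf{A2} and, paired against the crude bound $\|\zeta\|_{V_{\mathcal B_{2\rho}(y)}}\lesssim\|u\|_{V_{\mathcal B_{3\rho}(y)}}$, produces another admissible $|h|\,\|u\|^2$ term, while the shifted form applied to $\eta^2 u^y_h$ (which does lie in the shifted space) yields a further forcing contribution of the same type.

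The hard part, and the only place where the geometry of $\partial\Omega$ genuinely enters, is the residual coming from the support mismatch: the shifted equation cannot be tested against $\eta^2 u$, since $u$ is supported in $\bar\Omega$ but not in $\overline{\Omega-h}$, and the discrepancy is concentrated in the slab $\Omega\setminus(\Omega-h)$ adjacent to $\partial\Omega$. Here I would use two facts supplied by the cusp condition: on this slab $u^y_h\equiv 0$ (its argument has left $\Omega$), so the only surviving integrand pairs $\nabla u$ against $\nabla(\eta^2 u)$ over a layer whose $e_d$-extent is controlled by $\psi(|h|)$, and $u$ has vanishing trace on the graph $\partial\Omega$. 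Estimating this layer by combining the zero boundary values with Lemma~\ref{lem_37} (a Poincaré/translation bound across the thin slab) is what converts it into a genuinely linear-in-$|h|$ quantity, bounded by $\|u\|^2_{V_{\mathcal B_{3\rho}(y)}}\cdot|h|$; this is the step I expect to require the most care, both in checking that the slab is covered by $\mathcal C_{\omega,r}(e_d)$ and in extracting a full power of $|h|$ rather than suffering a half-power loss. Collecting the boundary-slab term, the coefficient-defect term, the forcing term and the cut-off commutator, and discarding the nonnegative left-hand side via \textbf{A1}, yields the asserted estimate with a constant $\tilde C$ depending only on $M_O$, $r$, $\vartheta$ and $\mathcal A$.
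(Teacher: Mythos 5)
Your first two paragraphs assemble essentially the same ingredients as the paper: the one-sided admissibility of the translate supplied by the cusp condition (the paper's property (\ref{eq_15})), a cutoff localization, testing the equation for $u$ against an admissible difference, Lemma \ref{lem_37} for the $L^2$-difference and the forcing term, and the $O(|h|)$ coefficient defect from \textbf{A2}. The gap is in your third paragraph, and it is exactly where you feared. Your cross term $\Phi(u^y_h,\eta^2\zeta)$ requires testing the translated equation against $\eta^2 u$, which lies in $V_Z$ but not in the translated space $V_{Z-h}$: the cusp condition is one-sided ($Z-h\subset Z$ locally, so only translation by $+h$ preserves the Dirichlet space, while $-h$ does not). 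Moreover, the discrepancy you must control is \emph{not} a volume integral over the slab $Z\setminus(Z-h)$. On that slab $u^y_h$ vanishes a.e., hence $\nabla u^y_h=0$ a.e. there, so every integrand pairing $\nabla u^y_h$ against anything is zero on the slab; what actually survives is the weak conormal flux of $u^y_h$ across the hypersurface $\partial(Z-h)$ paired with the trace of $\eta^2 u$ on it. This surface term is not touched by Lemma \ref{lem_37}: the flux of a weak solution is only of class $H^{-1/2}$ (for a $C^{0,\omega}$ boundary even its definition requires care, and its size is of order $\|u\|_{V}+\|f\|_{L}$, not small in $h$), while the trace of $u$ on $\partial(Z-h)$, obtained by integrating from $\partial Z$ where $u$ vanishes, is only $O(|h|^{1/2})$ in $L^2$. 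So this route gives at best a half power of $|h|$ --- precisely the loss you anticipated --- and nothing else in your decomposition cancels it.

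The paper's proof is organized so that this cross term never appears, and that is the idea your proposal is missing. Since $u$ minimizes $J(v)=\Phi(v,v)-2\langle f,v\rangle$ over $V_Z$, and the glued competitor $\mathcal{T}_{y,h}u=(1-\varkappa_y)u+\varkappa_y u^y_h$ belongs to $V_Z$ (this is the only place the cusp condition enters), inequality (\ref{eq_16}) bounds $\|u^y_h-u\|^2_{V_{\mathcal{B}_{\rho}(y)}}$ by the energy excess $\Phi(\mathcal{T}_{y,h}u,\mathcal{T}_{y,h}u)-\Phi(u,u)+2\langle f,u-\mathcal{T}_{y,h}u\rangle$. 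Convexity of $\xi\mapsto\mathfrak{a}(x,\xi)$ then reduces this excess to commutator terms in $\nabla\varkappa_y\,(u^y_h-u)$ (handled by Lemma \ref{lem_37}) plus the difference of \emph{pure} energies $\int\varkappa_y\left[\mathfrak{a}(x,\nabla u^y_h)-\mathfrak{a}(x,\nabla u)\right]d\mu$; the latter is a quadratic functional of $u$ alone, so a change of variables converts it into coefficient, Jacobian and cutoff defects, each $O(|h|)$ by \textbf{A2} and \textbf{(W4)}. No shifted equation is ever tested, hence no boundary flux arises; this is the Savar\'e--Schimperna device \cite{Savare} that the one-sided geometry forces. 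To make your Caccioppoli-style argument close, you would have to import this convexity step; as written, the support-mismatch term defeats the estimate.
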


\begin{proof}
We need the function
$$
\varkappa_y = \min\left\{ 1, \left( 2 - \frac{|x-y|}{\rho}\right)^+ \right\}\circ \chi_y,
$$
it possesses the following properties:
$$
0 \leq \varkappa_y \leq 1, \,\,\, \mathfrak{g}(x,\nabla \varkappa_y) \leq C_3(M)  \rho^{-1}, \,\,\, \varkappa_y|_{\mathcal{B}_\rho(y)} \equiv 1.
$$
For $v\in H^1(\mathcal{B}_{3\rho}(y))$ we introduce notation
$$
\mathcal{T}_{y,h} v = (1 - \varkappa_y) v + \varkappa_y v^y_h, 
$$
and remark that $\mathcal{T}_{y,h} v - v = \varkappa_y \left( v^y_h - v\right)$. Since $Z \in \mathcal{W}^{\omega}_{r,\vartheta}$, one has
\begin{equation}
\label{eq_15}
v\in V_Z \Rightarrow \mathcal{T}_{y,h} v \in V_Z \subset V,\,\,\, \mathrm{supp}(v), \,\mathrm{supp}(\mathcal{T}_{y,h} v) \subset \bar{Z},
\end{equation}
and hence
\begin{equation}
\label{eq_16}
 \| u^y_h - u \|^2_{V_{\mathcal{B}_{\psi(r)}(y)}} =  \| \mathcal{T}_{y,h} u - u \|^2_{V_{\mathcal{B}_{\psi(r)}(y)}} \leq \Phi(\mathcal{T}_{y,h} u, \mathcal{T}_{y,h} u) - \Phi(u,u) + 2 \langle f, u - \mathcal{T}_{y,h} u \rangle.
\end{equation}
The last summand can be estimated by lemma \ref{lem_37}:
$$
2 \langle f, u - \mathcal{T}_{y,h} u \rangle \leq \vartheta^3\,  |h|\, \| u \|_{H^1(\mathcal{B}_{3\rho})} \|f\|_{L_2(Z \cap \mathcal{B}_{3\rho}(y))} \leq \vartheta^3 \mathrm{p}^{-1} \alpha^{-1}  |h| \| u \|_{V_{\mathcal{B}_{3\rho}}} \| f \|_{L_{Z \cap \mathcal{B}_{3\rho}(y)}}.
$$
To estimate the remaining term in (\ref{eq_16}) we use the formula
$$
\nabla (\mathcal{T}_{y,h} v) = \varkappa_y \nabla (v^y_h) + (1 - \varkappa_y) \nabla v + \nabla \varkappa_y ( v^y_h - v) = \mathcal{T}_{y,h} \nabla v + \nabla \varkappa_y (v_h^y - v),
$$
and property (\ref{eq_15}) implies that $\Phi(\mathcal{T}_{y,h} u, \mathcal{T}_{y,h} u) - \Phi(u,u)$ does not exceed
\begin{align}
\label{eq_17}
& \int_Z \mathfrak{a}(y, \mathcal{T}_{y,h} \nabla u + \nabla \varkappa_y (u^y_h - u)) d\mu  - \int_Z \mathfrak{a}(y, \mathcal{T}_{y,h} \nabla u ) d\mu + \\
\label{eq_18}
& \int_Z \mathfrak{a}(y, \mathcal{T}_{y,h} \nabla u ) d\mu  - \int_Z \mathfrak{a}(y, \nabla u ) d\mu.
\end{align}
Since for $\xi, \eta \in \mathbb{R}^d$ the following inequality
$$
\mathfrak{a}(x, \xi + \eta) - \mathfrak{a}(x, \xi) \leq \left( \mathfrak{a}(x, \eta) \mathfrak{a}(x, 2\xi + \eta) \right)^{1/2} \leq \mathfrak{a}(x,\eta)^{1/2} (2 \mathfrak{a}(x,\xi)^{1/2} + \mathfrak{a}(x,\eta)^{1/2}),
$$
holds, we can apply it for $\xi = \mathcal{T}_{y,h} \nabla u$, $\eta = \nabla \varkappa_y (u^y_h - u))$, hence integrals in line (\ref{eq_17}) can be estimated as follows
\begin{align*}
 \int_Z \mathfrak{a}(y, \mathcal{T}_{y,h} \nabla u + \nabla \varkappa_y (u^y_h - u)) d\mu  - \int_Z \mathfrak{a}(y, \mathcal{T}_{y,h} \nabla u ) d\mu \leq \\
 \int_Z \mathfrak{a}(y, \mathcal{T}_{y,h} \nabla u + \nabla \varkappa_y (u^y_h - u)) d\mu  - \int_{Z\cap \mathcal{B}_{2\rho}(y)} \mathfrak{a}(y, \mathcal{T}_{y,h} \nabla u ) d\mu  \leq\\
\int_Z \mathfrak{a}\left(y, \nabla \varkappa_y (u^y_h - u)\right)^{1/2} \left[ \left( \mathfrak{a}(y, \nabla \varkappa_y (u^y_h - u)) \right)^{1/2} + 2 \left( \mathfrak{a}(y,\mathcal{T}_{y,h} \nabla u ) \right)^{1/2} \right] d\mu \leq\\
\left( \int_Z \mathfrak{a}(y, \eta)d\mu \right)^{1/2} \cdot \left[ \left( \int_Z \mathfrak{a}(y, \eta) d\mu \right)^{1/2} + 2 \left( \int_Z \mathfrak{a}(y,\xi )  d\mu \right)^{1/2} \right]\leq \\
\tilde{C}_3(M, \vartheta, \mathcal{A}) \| u -  u^y_h \|_{L_{\mathcal{B}_{2\rho}(y)}} \left( \tilde{C}_3(M, \vartheta, \mathcal{A}) \| u - u^y_h \|_{L_{\mathcal{B}_{2\rho}(y)}} + 2 \| \mathfrak{a}(x,\mathcal{T}_{y,h} \nabla u) \|_{L_{\mathcal{B}_{2\rho}(y)}} \right).
\end{align*}
It follows from definition of $\mathcal{T}_{y,h}$ that
$$
\| \mathfrak{a}(x,\mathcal{T}_{y,h} \nabla u) \|_{L_2 (\mathcal{B}_{2\rho}(y))} \leq  2 \tilde{C}_4(M, \rho) \| u \|_{V_{\mathcal{B}_{3\rho}(y)}}.
$$
Therefore applying lemma \ref{lem_37} one has
\begin{align*}
\int_Z \mathfrak{a}(y, \mathcal{T}_{y,h} \nabla u + \nabla \varkappa_y (u^y_h - u)) d\mu  &- \int_Z \mathfrak{a}(y, \mathcal{T}_{y,h} \nabla u ) d\mu \leq\\
2 \tilde{C}_3(M, \vartheta, \mathcal{A}) \left(\tilde{C}_3(M, \vartheta, \mathcal{A}) +  \tilde{C}_4(M, \rho)\right) \| u \|^2_{V_{\mathcal{B}_{3\rho}(y)}} |h| &= \tilde{C}_5(M, \vartheta, \rho, \mathcal{A}) \| u \|^2_{V_{\mathcal{B}_{3\rho}(y)}} |h|.
\end{align*}
The integrals in (\ref{eq_18}) can be estimated with regards for convexity of $\mathfrak{a}$:
\begin{align*}
&\mathfrak{a} (x, \mathcal{T}_{y,h} \nabla v) - \mathfrak{a} (x, \nabla v) \leq \\
(1-\varkappa_y ) \mathfrak{a}(x,\nabla v) + \varkappa_y \mathfrak{a}(x, &\nabla (v^y_h)) - \mathfrak{a}(x, \nabla v) = 
\varkappa_y \left[ \mathfrak{a}(x, \nabla v^y_h )  - \mathfrak{a}(x, \nabla v) \right],
\end{align*}
where $x \in \mathrm{supp}\kappa_y$. Hence we get
\begin{align*}
\int_Z \mathfrak{a}(y, \mathcal{T}_{y,h} \nabla u ) d\mu  - \int_Z \mathfrak{a}(y, \nabla u ) d\mu \leq \int_{Z \cap \mathcal{B}_{ 2\rho}(y)} \varkappa_y \left[ \mathfrak{a}(x, \nabla (v^y_h))  - \mathfrak{a}(x, \nabla v) \right] d\mu = \\
\int_{Z \cap \chi^{-1}_y (B_{2\rho}(y) + h)} (\varkappa_y)^y_{-h} \mathfrak{a}( x - h, \nabla v) d (\mu^y_{-h}) - \int_{Z\cap \mathcal{B}_{2\rho}(y)} \varkappa_y \mathfrak{a} (x, \nabla v)d\mu \leq \\
\int_{Z \cap \mathcal{B}_{3\rho}(y)} (\varkappa_y)^{y}_{-h} (\mathfrak{a}(x - h,\nabla v) - \varkappa_y \mathfrak{a}(x - h,\nabla v)) d (\mu^y_{-h})  +\\
+ \int_{Z \cap \mathcal{B}_{3\rho}(y)} \varkappa_y \mathfrak{a}(x - h,\nabla v)) ( d \mu^y_{-h} - d\mu)+ \int_{Z\cap \mathcal{B}_{2\rho}(y)} \varkappa_y (\mathfrak{a}(x - h,\nabla v) - \mathfrak{a}(x,\nabla v)) d\mu  \leq\\
\tilde{C}_6(M_O, \vartheta, \rho, \mathcal{A}) |h| \cdot \| u \|_{V_{\mathcal{B}_{3\rho}(y)}}.
\end{align*}
\end{proof}

\subsection{Properties of the sets satisfying uniform $\omega$-cusp condition}

\subsubsection{Local geometry}

In the proposition of this subsection $\Omega \subset M \subset \mathbb{R}^d$ denotes a domain of the class $\mathcal{W}^{\omega}_{r,1}$; notation $\Omega^\varepsilon$ see in subsection \ref{Haus_p}.

\begin{lemma}
\label{lem_39}
Let $\Omega$ satisfy uniform  $\omega$-cusp condition with parameter $r$ at point $x$. Then for any postitve $\varepsilon \leq \rho = \psi(r)$ the set $\Omega^{\varepsilon}$ satisfies uniform $\omega$-cusp condition with parameter $r_2 = \psi^{-1}\left( \psi(r) / 2 \right)$ at the point $x$.
\end{lemma}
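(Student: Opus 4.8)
The plan is to verify condition \textbf{(W1)} directly for the blown-up set $\Omega^\varepsilon$ at the point $x$, keeping the \emph{same} cusp direction $\xi_x$ that serves $\Omega$ and only replacing the parameter $r$ by $r_2 = \psi^{-1}(\psi(r)/2)$, so that $\psi(r_2) = \rho/2$ and $r_2 < r$. The whole argument rests on one elementary inclusion of cusps, namely $\mathcal{C}_{\omega, r_2}(\xi_x) \subset \mathcal{C}_{\omega, r}(\xi_x)$; once this is available, the passage from $\Omega$ to $\Omega^\varepsilon$ is a short distance estimate exploiting only that each point of $\Omega^\varepsilon$ lies within $\varepsilon$ of $\Omega$.

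First I would establish the cusp inclusion. Rotating so that $\xi_x = e_d$, the containment $\mathcal{S}_{\omega,r_2}(e_d) \subset \mathcal{S}_{\omega,r}(e_d)$ is immediate from $r_2 < r$ and monotonicity of $\omega$, since both constraints defining $\mathcal{S}_{\omega,r_2}$ are the more restrictive. For the spherical cap $\mathcal{F}_{\omega,r_2}(e_d)$ the key observation is Pythagorean: if $z = (\tilde{z}, z^d)$ satisfies $|z| < \psi(r_2)$ and $z^d \geq \omega(r_2)$, then
$$|\tilde{z}|^2 = |z|^2 - (z^d)^2 < \psi(r_2)^2 - \omega(r_2)^2 = r_2^2,$$
so $|\tilde{z}| < r_2 < r$. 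Consequently, if moreover $z^d \geq \omega(r)$ then $z \in \mathcal{F}_{\omega,r}(e_d)$, while if $z^d < \omega(r)$ then $\omega(|\tilde{z}|) \leq \omega(r_2) \leq z^d < \omega(r)$ places $z$ in $\mathcal{S}_{\omega,r}(e_d)$; either way $z \in \mathcal{C}_{\omega,r}(e_d)$. The hard part will be the boundary strictness $\omega(|\tilde z|) < z^d$ along the cusp graph, which I expect to be the only real nuisance and which is harmless away from intervals on which $\omega$ is locally constant.

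With the inclusion in hand I would check \textbf{(W1)} for $\Omega^\varepsilon$. Take $y \in B_{3\psi(r_2)}(x) \cap \Omega^\varepsilon = B_{3\rho/2}(x) \cap \Omega^\varepsilon$ and $c \in \mathcal{C}_{\omega,r_2}(\xi_x)$ with $y - c \in B_{2\psi(r_2)}(x) = B_{\rho}(x)$; the task is to show $y - c \in \Omega^\varepsilon$. Choose $p \in \Omega$ with $|p - y| < \varepsilon$. Then $|p - x| < 3\rho/2 + \varepsilon \le 5\rho/2 < 3\rho$, so $p \in B_{3\rho}(x) \cap \Omega$, and $|(p - c) - x| \le |p - y| + |(y - c) - x| < \varepsilon + \rho \le 2\rho$, using $\varepsilon \le \rho$ and $y - c \in B_{\rho}(x)$. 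Since $c \in \mathcal{C}_{\omega,r}(\xi_x)$ by the inclusion, condition \textbf{(W1)} for $\Omega$ (with $\psi(r) = \rho$) gives $p - c \in \Omega$; hence $d(y - c, \Omega) \le |(y - c) - (p - c)| = |y - p| < \varepsilon$, i.e. $y - c \in \Omega^\varepsilon$, as required. The substantive points are thus the cusp inclusion above and the bookkeeping of radii, where the reduction $\psi(r_2) = \psi(r)/2$ is precisely what supplies the slack needed to absorb a blow-up of size $\varepsilon \le \rho$.
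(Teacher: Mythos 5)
Your route is the same as the paper's: pick $p \in \Omega$ with $|p-y| < \varepsilon$, displace it by the cusp vector using condition \textbf{(W1)} for $\Omega$, and convert the strict inequality $|p-y| < \varepsilon$ into $y - c \in \Omega^\varepsilon$. In fact your bookkeeping is more careful than the paper's: the check $|(p-c)-x| < \varepsilon + \rho \le 2\rho$, which the paper's proof omits, is exactly what licenses invoking \textbf{(W1)} (whose conclusion only covers points of $B_{2\psi(r)}(x)$), and it is where the halving $\psi(r_2) = \psi(r)/2$ and the hypothesis $\varepsilon \le \rho$ are actually consumed. The one unresolved point is the cusp inclusion $\mathcal{C}_{\omega,r_2}(\xi_x) \subset \mathcal{C}_{\omega,r}(\xi_x)$, and your parenthetical dismissal of the strictness issue is backwards: the inclusion fails precisely on the flat stretches of $\omega$ that you call harmless, and the lemma does not exclude them ($\omega$ is only assumed nondecreasing and semi-additive; the paper's Remark even drops $\omega(0)=0$ in this section).

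Concretely, suppose there is $s < r_2$ with $\omega(s) = \omega(r_2) < \omega(r)$, i.e.\ $\omega$ is left-flat at $r_2$ but increases somewhere in $(r_2, r]$. Take $z = (\tilde z, z^d)$ with $s \le |\tilde z| < r_2$ and $z^d = \omega(r_2)$; then $|z|^2 < r_2^2 + \omega(r_2)^2 = \psi(r_2)^2$ and $z^d \ge \omega(r_2)$, so $z \in \mathcal{F}_{\omega,r_2}(e_d)$, yet $z \notin \mathcal{F}_{\omega,r}(e_d)$ (since $z^d < \omega(r)$) and $z \notin \mathcal{S}_{\omega,r}(e_d)$ (since $\omega(|\tilde z|) = z^d$ is not a strict inequality); one checks easily that \emph{every} point of $\mathcal{C}_{\omega,r_2} \setminus \mathcal{C}_{\omega,r}$ is of this form. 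Fortunately the repair is cheap and stays inside your argument: for such $c$ and every small $\delta > 0$ the vector $c_\delta = c + \delta e_d$ lies in $\mathcal{S}_{\omega,r}(e_d)$ (strictness is restored, $|\tilde c| < r_2 < r$, and $c^d + \delta < \omega(r)$). Choosing $\delta < \varepsilon - |p-y|$ (possible because $|p-y| < \varepsilon$ is strict), your distance estimate applies verbatim to $c_\delta$: it gives $p - c_\delta \in B_{2\rho}(x)$, hence $p - c_\delta \in \Omega$ by \textbf{(W1)}, and then $d(y-c, \Omega) \le |y - p| + \delta < \varepsilon$, so $y - c \in \Omega^\varepsilon$ all the same. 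For fairness: the paper's own proof glosses over the identical point --- it verifies the displacement property only for $h \in \mathcal{C}_{\omega,r}(\xi_x)$ and never explains why this yields \textbf{(W1)} with the cusp $\mathcal{C}_{\omega,r_2}(\xi_x)$ --- so making the inclusion explicit is a virtue of your write-up; it just needs the perturbation above rather than the wave-off you offer.
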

\begin{proof}
Choose $h \in \mathcal{C}_{\omega, r}(\xi_x)$ and $y\in B_{3\rho/2}(x)$; if $y\in\Omega^\varepsilon$ then there exists $z\in \Omega$ such that $|z-y| < \varepsilon \leq \rho$. Since $|z-x|<\rho + \frac{3}{2}\rho < 3\rho$ then from the local definition \ref{W_def1} one has
$$
z - h\in \Omega, \,\,\, \mathrm{dist}(y - h, \Omega) \leq |y - h - (z - h)| = |y - z| < \varepsilon,
$$
that is $y - h \in \Omega^\varepsilon$.
\end{proof}

\begin{remark}
\label{rem_310}
By simple geometric reason it is easy to see that
$$
0 < \varepsilon \leq \phi^{-1}\left( \frac{\psi(r)}{2} \right) \Rightarrow B_\varepsilon \left( \phi(\varepsilon) \xi \right) \subset \mathcal{C}_{\omega, r}(\xi).
$$
\end{remark}

In fact, for sufficiently small $\varepsilon$ the inclusion
$$
B_\varepsilon \left( (\omega(\varepsilon) + \varepsilon) \xi \right) \subset \mathcal{S}_{\omega, r}(\xi) \cup \mathcal{F}_{\omega, r}(\xi).
$$
holds. As regards sufficiency it is ensured by the inequality
$$
2 \phi(\varepsilon) < \psi(r).
$$

\begin{lemma}
\label{lem_tran}
Let $Z$ satisfies uniform $\omega$-cusp condition with parameter $R$ at the point $y$, $\rho = \psi(R)$, $\omega(0) = 0$, and
$$
- \phi^{-1}\left( \frac{\psi(R)}{4} \right) \leq \eta \leq 0 \leq \varepsilon \leq \phi^{-1}\left( \frac{\psi(R)}{4} \right)
$$
Then
$$
x \in B_{2\rho} (y) \backslash Z^{\eta} \,\,\Rightarrow \,\,x + \left[\phi(\varepsilon) + \phi (-\eta)\right] \xi_y \notin Z^{\varepsilon}.
$$
\end{lemma}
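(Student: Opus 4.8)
The plan is to reduce everything to the cone--monotonicity expressed by \textbf{(W1)}/\textbf{(W2)} and to argue by contradiction, after passing to the closed complement $W = M\setminus Z$. First I would rewrite the two sides of the implication in terms of $W$: the hypothesis $x\in B_{2\rho}(y)\setminus Z^{\eta}$ (with $\eta\le 0$, $\rho=\psi(R)$) means exactly $\mathrm{dist}(x,W)<-\eta$, so there is a point $p\in W$ with $|p-x|<-\eta$ (the degenerate case $\eta=0$ gives $p=x\in W$ and reduces to pure dilation); and the desired conclusion $w:=x+[\phi(\varepsilon)+\phi(-\eta)]\xi_y\notin Z^{\varepsilon}$ is the statement $B_{\varepsilon}(w)\cap Z=\varnothing$. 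Assuming the contrary, I fix a witness $q\in B_{\varepsilon}(w)\cap Z$, so $|q-w|<\varepsilon$, and aim to contradict $p\in W$.

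The geometric heart is the claim that $q-p\in\mathcal{C}_{\omega,R}(\xi_y)$. Writing $q-p=[\phi(\varepsilon)+\phi(-\eta)]\xi_y+v$ with $v=(q-w)+(x-p)$ and $|v|<\varepsilon+(-\eta)=:\delta$, this reduces to the inclusion $B_{\delta}\big([\phi(\varepsilon)+\phi(-\eta)]\xi_y\big)\subset\mathcal{C}_{\omega,R}(\xi_y)$, a variant of Remark~\ref{rem_310} with the ball pushed higher along $\xi_y$. Rotating $\xi_y$ to $e_d$, every $z=(\tilde z,z^d)$ in this ball satisfies $|\tilde z|<\delta$ and $z^d>\omega(\varepsilon)+\omega(-\eta)$; I would then check the three defining conditions of $\mathcal{S}_{\omega,R}(e_d)\cup\mathcal{F}_{\omega,R}(e_d)$. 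Semi-additivity of $\omega$ enters twice. For the lateral bound $|\tilde z|<R$ one uses $\omega(R)\le 2\omega(R/2)$ together with $\psi(R)\le R+\omega(R)$ to deduce $\phi^{-1}(\psi(R)/4)\le R/2$, whence $\delta\le R$; for membership above the graph one uses $\omega(|\tilde z|)\le\omega(\delta)\le\omega(\varepsilon)+\omega(-\eta)<z^d$. Finally the apex cap $|z|<\psi(R)$ follows from $|z|<|c_0|+\delta\le\psi(R)/2+\psi(R)/2$, where $|c_0|=\phi(\varepsilon)+\phi(-\eta)\le\psi(R)/2$. This is precisely where the sharpened restriction $\varepsilon,-\eta\le\phi^{-1}(\psi(R)/4)$ (factor $1/4$ rather than $1/2$) is indispensable: it is what keeps both $|c_0|$ and $\delta$ below $\psi(R)/2$, so that the displaced ball does not overflow the truncated cone.

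With $q-p\in\mathcal{C}_{\omega,R}(\xi_y)$ established, the contradiction comes from applying the cusp condition at $y$: since $p\in W$ and $q=p+(q-p)$ with $q-p$ in the cone, \textbf{(W2)} forces $q\notin Z$, contradicting $q\in Z$. I expect the cone--containment step to be the main obstacle, as it is the only place where the interplay of the two semi-additivity estimates with the factor $1/4$ must be handled carefully, across both the lateral constraint (for $\mathcal{S}_{\omega,R}$) and the apex cap (for $\mathcal{F}_{\omega,R}$). The remaining, more delicate bookkeeping is to guarantee that $p$ and the witness fall inside the balls where \textbf{(W1)}/\textbf{(W2)} are effective (namely $B_{3\rho}(y)$ and $B_{2\rho}(y)$); here the displacement bounds $\phi(\varepsilon),\phi(-\eta),\varepsilon\le\psi(R)/4$ are consumed, and this is the part most sensitive to the exact radii.
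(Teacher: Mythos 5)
Your proof is correct at the paper's own level of rigor, but it follows a genuinely different decomposition. The paper's proof introduces the intermediate point $t = x + \phi(-\eta)\xi_y$ and applies Remark~\ref{rem_310} twice in its original form: first, if $t$ were in $Z$, then $B_{-\eta}(x) \subset t - \mathcal{C}_{\omega,R}(\xi_y)$ together with \textbf{(W1)} would give $x \in Z^{\eta}$, a contradiction, so $t \notin Z$; second, $B_{\varepsilon}(t + \phi(\varepsilon)\xi_y) \subset t + \mathcal{C}_{\omega,R}(\xi_y)$ together with \textbf{(W2)} yields $\mathrm{dist}(x + [\phi(\varepsilon)+\phi(-\eta)]\xi_y,\, Z) \geq \varepsilon$. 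In that route only the fixed pairing (radius $s$, displacement $\phi(s)$) of Remark~\ref{rem_310} is used, once with $s=-\eta$ and once with $s=\varepsilon$, and semi-additivity of $\omega$ is never invoked explicitly. You instead collapse the two perturbations into a single step: two witnesses $p \notin Z$ with $|p-x| < -\eta$ and $q \in Z \cap B_{\varepsilon}(w)$, plus the displaced-ball inclusion $B_{\varepsilon-\eta}\bigl([\phi(\varepsilon)+\phi(-\eta)]\xi_y\bigr) \subset \mathcal{C}_{\omega,R}(\xi_y)$, whose verification genuinely requires monotonicity and semi-additivity of $\omega$ (namely $\omega(\varepsilon-\eta) \leq \omega(\varepsilon)+\omega(-\eta)$) and the constant $1/4$ in both the lateral bound and the apex cap; your estimates $\phi^{-1}(\psi(R)/4) \leq R/2$ and $|c_0|+\delta < \psi(R)$ are correct, as is the reduction of the hypothesis to $\mathrm{dist}(x, M\setminus Z) < -\eta$. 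The paper's factorization buys economy: Remark~\ref{rem_310} is reused verbatim and the $\omega$-arithmetic stays hidden in the addition of $\phi(-\eta)$ and $\phi(\varepsilon)$. Yours buys a single invocation of the cusp condition and makes explicit exactly where semi-additivity and the factor $1/4$ are consumed.

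One caveat, which applies to you and to the paper equally: applying \textbf{(W2)} requires the $Z$-side point to lie in $B_{2\rho}(y)$, whereas from $x \in B_{2\rho}(y)$ one only gets $|q-y| < 2\rho + \phi(\varepsilon)+\phi(-\eta)+\varepsilon \leq \frac{11}{4}\rho$; likewise, in the paper's proof neither $B_{-\eta}(x)$ nor $B_{\varepsilon}(t+\phi(\varepsilon)\xi_y)$ need lie inside $B_{2\rho}(y)$, which is where \textbf{(W1)}/\textbf{(W2)} actually constrain $Z$. So the ``bookkeeping'' you defer in your last sentence cannot in fact be fully closed when $x$ is near $\partial B_{2\rho}(y)$: the cusp condition at the single point $y$ says nothing about $Z$ in the annulus $B_{3\rho}(y)\setminus B_{2\rho}(y)$, and a small hole placed there can defeat the literal statement. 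This is an imprecision of the lemma as stated (harmless in its applications, where the cusp condition holds at every point of $M$), not a defect of your argument relative to the paper's.
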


\begin{proof} 
Let us denote $z = x + \left[\phi(\varepsilon) + \phi (-\eta)\right] \xi_y$, $t = x + \phi (-\eta) \xi_y$; assuming $t \in Z$ because of inclusion $t \in B_{3\rho}(y)$ one has by Remark \ref{rem_310} that
$$
B_{-\eta}(x) = B_{-\eta} \left( t - \phi(-\eta) \xi_y \right) \subset t - \mathcal{C}_{\omega, r}(\xi_y) \subset Z,
$$
that is $x \in Z^{\eta}$. This contradiction means that $t \not\in Z$. Applying Remark  \ref{rem_310} again one has 
$$
B_\varepsilon(z) = B_\varepsilon(t + \phi(\varepsilon)\xi_y) \subset  t +  \mathcal{C}_{\omega, r}(\xi_y) \subset \mathbb{R}^d \backslash Z,
$$
hence $\mathrm{dist}(z, Z) \geq \varepsilon$.
\end{proof}

\begin{lemma}
\label{lem_312}
Let $\Omega_1, \Omega_2$ be open subsets of $\mathbb{R}^d$, $\Omega_1 \in \mathcal{W}^{\omega}_{r,\vartheta}$, $\rho = \psi(r)$, $\omega(0) = 0$, $e(\Omega_2, \Omega_1) \leq \phi^{-1}\left( \frac{\psi(r)}{2} \right)$. Then $\check{e}(\Omega_2, \Omega_1) \leq \phi(e(\Omega_2, \Omega_1))$.

If in addition $\Omega_2 \in \mathcal{W}^{\omega}_{r, \vartheta}$ and  $d_H(\Omega_1, \Omega_2) \leq \phi^{-1}\left( \frac{\psi(r)}{2} \right)$ one has
$$
d^\mathcal{HP}(\Omega_1, \Omega_2) \leq \phi (d_\mathcal{HS}(\Omega_1, \Omega_2)).
$$
\end{lemma}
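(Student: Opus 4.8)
The statement of Lemma~\ref{lem_312} has two parts: first a one-directional comparison of the two Hausdorff-type semimetrics $\check e$ and $e$ under the single regularity hypothesis $\Omega_1 \in \mathcal W^\omega_{r,\vartheta}$, and then the symmetric consequence for $d^{\mathcal{HP}}$ when both domains are regular. My plan is to prove the first assertion directly from the geometry supplied by Lemma~\ref{lem_tran} (the ``translate out of the blown-up set'' lemma), and then deduce the second assertion by applying the first in both directions $(1,2)$ and $(2,1)$ and combining with the elementary relations among $e$, $\check e$, $d_{\mathcal H}$, $d^{\mathcal H}$, $d_{\mathcal{HS}}$ recorded in Section~\ref{Haus_p}.

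First I would reduce to a local estimate. Set $\varepsilon = e(\Omega_2,\Omega_1)$, so $\varepsilon \le \phi^{-1}(\psi(r)/2)$ by hypothesis. To bound $\check e(\Omega_2,\Omega_1) = e(\Omega_2\setminus\Omega_1,\partial\Omega_2)$ (using the identity from Section~\ref{Haus_p}), I must show that every point $x \in \Omega_2\setminus\Omega_1$ lies within distance $\phi(\varepsilon)$ of $\partial\Omega_2$; equivalently that $x^{-\phi(\varepsilon)}$, the $\phi(\varepsilon)$-contraction, forces $x$ out of $\Omega_2$ at that scale. The mechanism is Lemma~\ref{lem_tran} applied in the chart of the technical atlas $\mathfrak W$ around the relevant point of $\partial\Omega_1$: since $x \notin \Omega_1$, there is a nearby boundary point $y$ of $\Omega_1$ (within $\varepsilon$, since $x$ is close to $\Omega_1\setminus\Omega_2$ via the definition of $e$), and around $y$ the set $\Omega_1$ satisfies the uniform $\omega$-cusp condition, so the cusp $\mathcal C_{\omega,r}(\xi_y)$ can be translated to push $x$ a distance at least $\phi(\varepsilon)$ into the complement. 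Taking $\eta = 0$ and the given $\varepsilon$ in Lemma~\ref{lem_tran} yields exactly $x + \phi(\varepsilon)\xi_y \notin \Omega_1^{\varepsilon} \supset \Omega_2$ (the inclusion holds because $e(\Omega_2,\Omega_1)=\varepsilon$ means $\Omega_2 \subset \Omega_1^\varepsilon$), which certifies that $x$ is within $\phi(\varepsilon)$ of $\partial\Omega_2$, giving $\check e(\Omega_2,\Omega_1)\le\phi(\varepsilon)$.

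For the second assertion I would argue symmetrically. Assuming now $\Omega_2 \in \mathcal W^\omega_{r,\vartheta}$ as well, the first part applies with the roles of $\Omega_1,\Omega_2$ interchanged, so $\check e(\Omega_1,\Omega_2)\le\phi(e(\Omega_1,\Omega_2))$ too, provided the relevant $e$ is $\le\phi^{-1}(\psi(r)/2)$, which is guaranteed by $d_{\mathcal H}(\Omega_1,\Omega_2)\le\phi^{-1}(\psi(r)/2)$. Since $\phi$ is nondecreasing, each of the four quantities $e(\Omega_1,\Omega_2)$, $e(\Omega_2,\Omega_1)$, $\check e(\Omega_1,\Omega_2)$, $\check e(\Omega_2,\Omega_1)$ is bounded by $\phi$ of one of the $e$-type quantities, hence by $\phi(d_{\mathcal{HS}}(\Omega_1,\Omega_2))$ once I observe that $d_{\mathcal{HS}}$ is the minimum of these distances and that $\check e$ dominates $e$ up to the factor $\phi$. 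Taking the maximum defining $d^{\mathcal{HP}} = \max\{d_{\mathcal H}, d^{\mathcal H}\}$ and using monotonicity of $\phi$ then delivers $d^{\mathcal{HP}}(\Omega_1,\Omega_2)\le\phi(d_{\mathcal{HS}}(\Omega_1,\Omega_2))$.

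**The main obstacle.** The delicate point is the reduction to the chart in which Lemma~\ref{lem_tran} is literally applicable: that lemma is stated in $\mathbb R^d$ for the flat cusp $\mathcal C_{\omega,r}(\xi_y)$, whereas $\Omega_1$ lives on $M$ and the cusp condition holds only after pushing forward by the chart map $\chi_y$ of the $(\psi(r),\vartheta)$-technical atlas. I must therefore track how distances distort under $\chi_y$ and its inverse, controlled by the $C^{0,1}$-bounds on the Jacobi matrices built into conditions \textbf{(W4)}--\textbf{(W5)}; this is where the hypothesis $\vartheta$ enters and where the precise constants in $\psi,\phi$ and the threshold $\phi^{-1}(\psi(r)/2)$ must be matched so that a point within $\varepsilon$ in $M$ stays within the admissible range $\phi^{-1}(\psi(r)/4)$ demanded by Lemma~\ref{lem_tran}. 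Verifying that the metric distortion does not spoil the clean bound $\phi(\varepsilon)$ — and in particular choosing $\vartheta$ (or restricting to $\vartheta$ close to $1$, as the subsection's running assumption $\mathcal W^\omega_{r,1}$ suggests) so that the Euclidean estimate transfers back to the Riemannian distance with no worse than a $\phi$-type loss — is the crux of the argument; everything else is bookkeeping with the identities of Section~\ref{Haus_p}.
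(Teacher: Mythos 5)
Your first part is essentially the paper's own proof: for $x\in\Omega_2\setminus\Omega_1$ note $\Omega_2\subset\Omega_1^{\varepsilon}$ with $\varepsilon=e(\Omega_2,\Omega_1)$, apply Lemma~\ref{lem_tran} with $Z=\Omega_1$ and $\eta=0$ (so $\phi(-\eta)=0$ since $\omega(0)=0$) to get $x+\phi(\varepsilon)\xi\notin\Omega_1^{\varepsilon}\supset\Omega_2$, whence $d(x,\partial\Omega_2)\le\phi(\varepsilon)$ and $\check{e}(\Omega_2,\Omega_1)\le\phi(\varepsilon)$. Incidentally, the ``main obstacle'' you flag is a non-issue: this subsection is explicitly set in $\Omega\subset M\subset\mathbb{R}^d$ with $\vartheta=1$, so the paper's argument is purely Euclidean and no chart-distortion bookkeeping is needed.

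The second part of your proposal has a genuine gap. Interchanging $\Omega_1$ and $\Omega_2$ in the first statement only yields the two inequalities $\check{e}(\Omega_2,\Omega_1)\le\phi(e(\Omega_2,\Omega_1))$ and $\check{e}(\Omega_1,\Omega_2)\le\phi(e(\Omega_1,\Omega_2))$, i.e.\ $d^{\mathcal{H}}\le\phi(d_{\mathcal{H}})$, hence only $d^{\mathcal{HP}}\le\phi(d_{\mathcal{H}})$. But $d_{\mathcal{HS}}$ is the \emph{minimum} of four quantities, and it may be realized by $d^{\mathcal{H}}$ or by one of the mixed quantities $e(\Omega_1\Delta\Omega_2,\partial\Omega_i)$ (note that by the identities of Section~\ref{Haus_p} one has $e(\Omega_1\Delta\Omega_2,\partial\Omega_1)=\max\{\check{e}(\Omega_1,\Omega_2),\,e(\Omega_2,\Omega_1)\}$), each of which can be strictly smaller than $d_{\mathcal{H}}$; for irregular sets $e$ and $\check{e}$ are genuinely incomparable (a long thin tentacle attached to $\Omega_1$ makes $e(\Omega_2,\Omega_1)$ large while $\check{e}(\Omega_2,\Omega_1)$ stays small). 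So the estimate also requires the reverse inequalities, bounding each $e(\Omega_i,\Omega_j)$ by $\phi(\check{e}(\Omega_i,\Omega_j))$. You assert exactly this (``$\check{e}$ dominates $e$ up to the factor $\phi$''), but it does not follow from anything you proved: however you permute the two sets, the first statement always bounds $\check{e}$ by $\phi(e)$, never $e$ by $\phi(\check{e})$. The missing idea --- the one the paper's proof invokes --- is that the complement $M\setminus\bar{\Omega}$ of a set satisfying the uniform $\omega$-cusp condition again satisfies it, by the symmetry of \textbf{(W1)}/\textbf{(W2)} under replacing $\Omega$ by its complement noted after Definition~\ref{W_def1}. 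Applying the first statement to the complements and using $\check{e}(X,Y)=e(M\setminus Y,\,M\setminus X)$ swaps the roles of $e$ and $\check{e}$ and produces precisely the reverse inequalities; with all four comparisons in hand, every one of $e(\Omega_1,\Omega_2)$, $e(\Omega_2,\Omega_1)$, $\check{e}(\Omega_1,\Omega_2)$, $\check{e}(\Omega_2,\Omega_1)$ is bounded by $\phi$ of whichever maximum realizes $d_{\mathcal{HS}}$, and $d^{\mathcal{HP}}(\Omega_1,\Omega_2)\le\phi(d_{\mathcal{HS}}(\Omega_1,\Omega_2))$ follows.
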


\begin{proof}
It is sufficient to prove the first statement; the second follows from definition of $\check{e}(X,Y)$ and the fact that locally $M \backslash \bar{\Omega}$ satisfies $\omega$-cusp condition if $\Omega$ satisfies this condition.

If $\lambda = e(\Omega_1, \Omega_2)$ then $\Omega_2 \subset \Omega_1^{\lambda}$. For $y \in \Omega_2 \backslash \Omega_1$ one has
$$
y + \phi(\lambda) \xi_y \in \mathbb{R}^d \backslash \Omega_1^{\lambda} \subset \mathbb{R}^d \backslash \Omega_2.
$$
by lemma \ref{lem_tran} and inequality $\lambda \leq \phi^{-1}\left( \frac{\psi(r)}{2} \right)$. It follows that $\mathrm{dist}(y, \mathbb{R}^d \backslash \Omega_2) \leq \phi(\lambda)$ and hence
$$
\check{e}(\Omega_2, \Omega_1) = \sup_{y\in \Omega_2 \backslash \Omega_1} d(y, \mathbb{R}^d \backslash \Omega_2) \leq \phi (\lambda).
$$
\end{proof}

\subsubsection{Global analogs of lemmas \ref{lem_39} and \ref{lem_tran}.}

\begin{corollary}
\label{obvious}
If  $\Omega \in \mathcal{W}^{\omega}_{r,\vartheta}$ and $\omega(0) = 0$ then for each positive $\varepsilon$ not exceeding $\min \left\{ \phi^{-1}\left(\frac{\psi(r) - \omega(r)}{2(\vartheta + 1)}\right), \frac{\psi(r)}{\vartheta} \right\}$ the following holds:
$$
\mathcal{O}_{\varepsilon}(\Omega) \in \mathcal{W}^{\omega+(\vartheta + 1)\phi(\varepsilon)}_{r_2, \vartheta},
$$
where $ r_2 = \psi^{-1}\left( \frac{\psi(r)}{2}\right)$.
\end{corollary}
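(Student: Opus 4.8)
The plan is to verify the defining conditions of the class $\mathcal{W}^{\omega+(\vartheta+1)\phi(\varepsilon)}_{r_2,\vartheta}$ directly, reusing the very atlas $\mathfrak{W}=\{(W_y,\chi_y)\}$ that witnesses $\Omega\in\mathcal{W}^{\omega}_{r,\vartheta}$. Since $\psi(r_2)=\psi(r)/2<\psi(r)$, conditions \textbf{(W3)}--\textbf{(W5)} survive verbatim, so $\mathfrak{W}$ is also $(\psi(r_2),\vartheta)$--technical. Fix $y$, write $x_0=\chi_y(y)$ and $Z=\chi_y(\Omega\cap\mathcal{B}_{3\psi(r)}(y))$, which satisfies the $\omega$--cusp condition with parameter $r$ at $x_0$ with some axis $\xi_{x_0}$; after a rotation I may assume $\xi_{x_0}=e_d$. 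The goal reduces to showing that $D:=\chi_y(\mathcal{O}_\varepsilon(\Omega)\cap\mathcal{B}_{3\psi(r_2)}(y))$ satisfies the $\big(\omega+(\vartheta+1)\phi(\varepsilon)\big)$--cusp condition with parameter $r_2$ at $x_0$, keeping the same axis $e_d$.

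First I would transfer the geodesic neighborhood into the chart. Conditions \textbf{(W4)}--\textbf{(W5)} make the geodesic distance $\delta$ and the chart--Euclidean distance bi-Lipschitz with a constant governed by $\vartheta$, and \textbf{(W3)} guarantees that for nearby points the realizing geodesics stay inside $W_y$. Consequently, within $\mathcal{B}_{3\psi(r_2)}(y)$ the set $D$ is squeezed between two Euclidean dilations of $Z$,
$$
Z^{\varepsilon/\vartheta}\ \subset\ D\ \subset\ Z^{\vartheta\varepsilon},
$$
the bound $\varepsilon\le\psi(r)/\vartheta$ ensuring $\vartheta\varepsilon\le\psi(r)$ so that both dilations live where the cusp condition of $Z$ is available. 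This sandwich is the manifold lift of Lemmas~\ref{lem_39} and~\ref{lem_tran}: it converts geodesic blow-up into Euclidean blow-up at the cost of the distortion factor $\vartheta$.

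Next I would check \textbf{(W2)} for $D$ with the narrowed cone $\mathcal{C}_{\omega',r_2}(e_d)$, where $\omega'=\omega+c_0$ and $c_0=(\vartheta+1)\phi(\varepsilon)$. The engine is a strengthened form of Remark~\ref{rem_310}: for every $c\in\mathcal{C}_{\omega',r_2}(e_d)$ one has $B_{\vartheta\varepsilon}(c)\subset\mathcal{C}_{\omega,r}(e_d)$, i.e. shrinking the modulus by the constant $c_0$ creates a uniform $\vartheta\varepsilon$--collar inside the wider cone. Granting this, take $x\in B_{3\psi(r_2)}(x_0)\setminus D$; by the lower inclusion $x\notin Z$, and for $c\in\mathcal{C}_{\omega',r_2}(e_d)$ with $x+c\in B_{2\psi(r_2)}(x_0)$ the collar property together with \textbf{(W2)} for $Z$ yields $x+B_{\vartheta\varepsilon}(c)\subset\mathbb{R}^d\setminus Z$, whence $\mathrm{dist}(x+c,Z)\ge\vartheta\varepsilon$, i.e. $x+c\notin Z^{\vartheta\varepsilon}\supset D$. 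This is precisely \textbf{(W2)} for $D$, and gluing over all $y$ (the atlas being fixed) finishes the proof.

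The hard part is the collar inclusion $B_{\vartheta\varepsilon}(c)\subset\mathcal{C}_{\omega,r}(e_d)$, which is where every hypothesis is spent. Its calibration is the inequality $\phi(\vartheta\varepsilon)\le c_0$: semi-additivity gives $\omega(\vartheta\varepsilon)\le(\vartheta+1)\omega(\varepsilon)$, hence $\phi(\vartheta\varepsilon)\le(\vartheta+1)\phi(\varepsilon)=c_0$, which is exactly why the factor $(\vartheta+1)$ appears. Proving it demands a case split over the slab $\mathcal{S}_{\omega',r_2}$, the floor $\mathcal{F}_{\omega',r_2}$ and the tip, using $\omega(a+b)\le\omega(a)+\omega(b)$ for the height estimates, plus control of the radial truncations: on the floor one has $|\tilde c|<r_2<r$ since the radius satisfies $\psi_{\omega'}(r_2)^2-(\omega(r_2)+c_0)^2=r_2^2$, while the inclusion into $\mathcal{F}_{\omega,r}$ needs $\sqrt{r_2^2+(\omega(r_2)+c_0)^2}+\vartheta\varepsilon\le\psi(r)$. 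This last radial budget is what the bound $\varepsilon\le\phi^{-1}\!\big(\tfrac{\psi(r)-\omega(r)}{2(\vartheta+1)}\big)$ secures, through $c_0\le\tfrac12(\psi(r)-\omega(r))$. The only other delicate point is pinning the bi-Lipschitz distortion to the stated constant $\vartheta$; the sandwich step itself is routine.
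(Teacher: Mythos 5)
Your first half --- reusing the technical atlas and squeezing the chart image of $\mathcal{O}_\varepsilon(\Omega)$ between the Euclidean blow-ups, $Z^{\varepsilon/\vartheta}\subset D\subset Z^{\vartheta\varepsilon}$ --- is exactly what the paper does. The gap is at what you yourself single out as the hard part: the collar claim, namely that every $c\in\mathcal{C}_{\omega+c_0,r_2}(e_d)$ with $c_0=(\vartheta+1)\phi(\varepsilon)$ satisfies $B_{\vartheta\varepsilon}(c)\subset\mathcal{C}_{\omega,r}(e_d)$, is \emph{false} under the stated hypotheses, and your radial budget does not close. The floor radius satisfies $\sqrt{r_2^2+(\omega(r_2)+c_0)^2}\le\psi(r)/2+c_0$ essentially sharply, so the quantity you must dominate by $\psi(r)$ is $\psi(r)/2+c_0+\vartheta\varepsilon$; the hypotheses only give $c_0\le\tfrac12(\psi(r)-\omega(r))$ and $\vartheta\varepsilon\le\phi(\vartheta\varepsilon)\le c_0$, so this quantity can reach $\tfrac32\psi(r)-\omega(r)$, which exceeds $\psi(r)$ whenever $\omega(r)<\psi(r)/2$. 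Concretely take $\omega\equiv 0$ (allowed, and explicitly contemplated in the paper), so $\psi(r)=r$, $\phi(t)=t$, $r_2=r/2$, $\mathcal{C}_{0,r}=\{|z|<r,\ z^d\ge 0\}$ and the slab of the narrowed cone is empty; take $\vartheta\ge 3$, $\varepsilon=\tfrac{r}{2(\vartheta+1)}$ (the allowed maximum), hence $c_0=r/2$ and $\vartheta\varepsilon=\tfrac{\vartheta}{2(\vartheta+1)}r$; take the floor point $c=(\tilde c,r/2)$ with $|\tilde c|$ slightly below $r/2$ and a horizontal $b$ with $|b|$ slightly below $\vartheta\varepsilon$. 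Then $(c+b)^d=r/2\ge\omega(r)$ but $|c+b|^2$ approaches $\tfrac{r^2}{4}\bigl(\bigl(\tfrac{2\vartheta+1}{\vartheta+1}\bigr)^2+1\bigr)>r^2$, so $c+b\notin\mathcal{C}_{0,r}$. There simply is no uniform $\vartheta\varepsilon$--collar near the lateral rim of the cone's floor, and your argument needs it precisely there, since such $c$ are admissible in your verification of \textbf{(W2)} for $D$.

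This is why the paper routes around the rim instead of through it. It never tries to buy $\mathrm{dist}(x+c,Z)\ge\vartheta\varepsilon$ from the cusp condition of $Z$ at the single base point $x$. Instead, Lemma \ref{lem_39} shows that the blow-up $Y^{\vartheta\varepsilon}$ \emph{itself} inherits the $\omega$--cusp condition with parameter $r_2$ (blow-up commutes with the translations appearing in \textbf{(W1)}), Lemma \ref{lem_tran} lifts the base point by $\phi\bigl((\vartheta-1/\vartheta)\varepsilon\bigr)\xi_y$ so that it leaves $Y^{\vartheta\varepsilon}$, and then \textbf{(W2)} of the blow-up is applied at the lifted point. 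The cone comparison this requires is only $\mathcal{C}_{\omega+c,r_2}(\xi)\subset c\,\xi+\mathcal{C}_{\omega,r_2}(\xi)$, a vertical shift by the \emph{full} increment $c$, and that inclusion is true: for a floor point, $z^d\ge\omega(r_2)+c$ gives $(z^d-c)^2\le(z^d)^2-2c\,\omega(r_2)-c^2$, which exactly cancels the inflation $(\omega(r_2)+c)^2=\omega(r_2)^2+2c\,\omega(r_2)+c^2$ of the floor radius. Your collar statement demands strictly more --- an open $\vartheta\varepsilon$--fattening of the narrowed cone inside $\mathcal{C}_{\omega,r}$ --- and that surplus is what the hypotheses cannot pay for. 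To repair your proof you would have to replace the strengthened Remark \ref{rem_310} by the paper's mechanism: transfer the cusp condition to the fattened set via Lemma \ref{lem_39} and apply it at a shifted base point via Lemma \ref{lem_tran}.
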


\begin{proof}
Fix $y \in M$ and set $Y = \chi_y \left(\Omega \cap \mathcal{B}_{3\psi(r)}(y)\right)$. Since $Y$ satisfies uniform $\omega$-cusp condition with parameter $r$ at the point $\chi_y(y)$, then according to lemma \ref{lem_39} for each positive $\varepsilon \leq \psi(r) \vartheta$ the set $B_{\varepsilon/\vartheta}$ also satisfies uniform $\omega$-cusp condition with parameter $r_2 = \psi^{-1}(\psi(r) / 2)$ at the point $\chi_y(y)$. Setting $Z = Y^{\varepsilon / \vartheta}$, $R = r_2$, $\rho = \psi(R) = \psi(r)/2$ one obtains from lemma \ref{lem_tran} that
$$
x \in B_{2\rho} (y) \backslash Z \,\,\Rightarrow \,\,x + \phi\left(\left(\vartheta - \frac{1}{\vartheta}\right)\varepsilon\right) \xi_y \notin Z^{\left(\vartheta - \frac{1}{\vartheta}\right) \varepsilon} = \left(Y^{\varepsilon/\vartheta}\right)^{\left(\vartheta - \frac{1}{\vartheta}\right) \varepsilon} = Y^{\frac{\varepsilon}{\vartheta} + \left(\vartheta - \frac{1}{\vartheta}\right) \varepsilon} = Y^{\vartheta \varepsilon}.
$$
now because of lemma \ref{lem_39} for each positive $\varepsilon < \psi(r) /\vartheta$ the set $Y^{\vartheta \varepsilon}$ satisfies uniform $\omega$-cusp condition with parameter $r_2 = \psi^{-1}(\psi(r) / 2)$ at the point $\chi_y(y)$. Thus
\begin{align*}
x \in \chi_y \left[ \mathcal{B}_{\psi(r)}(y) \backslash \mathcal{O}_{\varepsilon}(\Omega) \right] \subset B_{2\rho} (y) \backslash Z &\Rightarrow x + \phi\left((\vartheta - 1/\vartheta)\varepsilon\right) \xi_y \notin Y^{\vartheta \varepsilon} \Rightarrow \\
x + \phi\left((\vartheta - 1/\vartheta)\varepsilon\right) \xi_y + \mathcal{C}_{\omega, r}(\xi_y) \subset \mathbb{R}^d \backslash Y^{\vartheta \varepsilon} &\Rightarrow x + (2\vartheta + 1)\phi\left(\varepsilon\right) \xi_y + \mathcal{C}_{\omega, r}(\xi_y) \subset \mathbb{R}^d \backslash Y^{\vartheta \varepsilon} \Rightarrow\\
x +  \mathcal{C}_{\omega + (2\vartheta + 1)\phi\left(\varepsilon\right), r/2}(\xi_y) \subset \mathbb{R}^d \backslash Y^{\vartheta \varepsilon} \Rightarrow x +  &\mathcal{C}_{\omega + (2\vartheta + 1)\phi\left(\varepsilon\right), r/2}(\xi_y) \subset \chi_y \left( \mathcal{B}_{3\psi(r)}(y) \backslash \mathcal{O}_{\varepsilon}(\Omega)\right),
\end{align*}
if  $\varepsilon \leq \phi^{-1}\left(\frac{\psi(r) - \omega(r)}{2(\vartheta + 1)}\right)$.
\end{proof}

For negative $\eta$ introduce $\mathcal{O}_{\eta}(\Omega) \stackrel{\mathrm{def}}{=} \{y \in \Omega\, |\, \mathcal{O}_{|\eta|}(y) \cap \partial \Omega = \varnothing \}$.

\begin{corollary}
\label{cor_tran}
If $\omega(0) = 0$ and $0 < -\eta, \varepsilon < \phi^{-1}\left( \frac{\psi(R)}{4(\vartheta + 1)} \right)$ then
$$
x \in \chi_y\left( \mathcal{B}_{2\psi(r)}(y) \backslash \mathcal{O}_{\eta}(\Omega)\right) \Rightarrow  x + ( \vartheta + 1 ) (\phi(\varepsilon) + \phi(- \eta))  \xi_y \notin \chi_y\left( \mathcal{O}_{\varepsilon}(\Omega) \cap \mathcal{B}_{3\psi(r)}(y)\right).
$$
\end{corollary}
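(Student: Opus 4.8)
The plan is to mirror the scheme of Corollary~\ref{obvious}: pass to the technical chart $\chi_y$, convert the geodesic contraction $\mathcal{O}_{\eta}(\Omega)$ and blowing $\mathcal{O}_{\varepsilon}(\Omega)$ into Euclidean ones via the two-sided metric bound built into a $(\psi(r),\vartheta)$--technical atlas, apply the Euclidean transition Lemma~\ref{lem_tran}, and finally enlarge the admissible shift along $\xi_y$ by means of the cone form \textbf{\textit{(W2)}} of the cusp condition. First I would fix $y\in M$ and set $Y=\chi_y(\Omega\cap\mathcal{B}_{3\psi(r)}(y))$, which by definition of $\mathcal{W}^{\omega}_{r,\vartheta}$ satisfies the uniform $\omega$--cusp condition with parameter $r$ at $\chi_y(y)$. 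Condition \textbf{\textit{(W4)}} makes the Euclidean and geodesic distances comparable with factor $\vartheta$, so that a geodesic ball of radius $s$ contains $B_{s/\vartheta}$ and is contained in $B_{\vartheta s}$ in coordinates. Feeding this into the definitions of $\mathcal{O}_\varepsilon$ and $\mathcal{O}_\eta$ yields the two inclusions I need: $\chi_y(\mathcal{O}_{\varepsilon}(\Omega)\cap\mathcal{B}_{3\psi(r)}(y))\subset Y^{\vartheta\varepsilon}$ for the blowing, and $Y^{\vartheta\eta}\subset\chi_y(\mathcal{O}_{\eta}(\Omega))$ for the contraction. Read contrapositively, the latter turns the hypothesis $x\notin\chi_y(\mathcal{O}_{\eta}(\Omega))$ into $x\in B_{2\psi(r)}(y)\backslash Y^{\vartheta\eta}$.

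Next I would apply Lemma~\ref{lem_tran} to $Z=Y$ with Euclidean contraction parameter $\vartheta\eta$ and blowing parameter $\vartheta\varepsilon$. Its hypotheses $\vartheta\varepsilon,\,-\vartheta\eta\leq\phi^{-1}(\psi(r)/4)$ follow from the standing assumption $\phi(\varepsilon),\phi(-\eta)<\psi(r)/(4(\vartheta+1))$ together with the subadditivity estimate $\phi(\vartheta s)\leq(\vartheta+1)\phi(s)$, valid because $\omega$ is nondecreasing and semi-additive and $\vartheta\geq 1$. The lemma then gives $x+[\phi(\vartheta\varepsilon)+\phi(\vartheta|\eta|)]\xi_y\notin Y^{\vartheta\varepsilon}$. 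The same estimate shows that the target displacement $(\vartheta+1)(\phi(\varepsilon)+\phi(-\eta))$ dominates $\phi(\vartheta\varepsilon)+\phi(\vartheta|\eta|)$, while remaining strictly below $\psi(r)/2=\psi(r_2)$, where $r_2=\psi^{-1}(\psi(r)/2)$.

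Finally I would promote the short shift to the long one. By Lemma~\ref{lem_39} the set $Y^{\vartheta\varepsilon}$ still satisfies the $\omega$--cusp condition, now with parameter $r_2$, so the equivalent form \textbf{\textit{(W2)}} applies to its complement: once $x+[\phi(\vartheta\varepsilon)+\phi(\vartheta|\eta|)]\xi_y$ lies outside $Y^{\vartheta\varepsilon}$, adding any vector of $\mathcal{C}_{\omega,r_2}(\xi_y)$ keeps it outside. Since the extra axial displacement has length strictly between $0$ and $\psi(r_2)$, it is such a cone vector, because the positive $\xi_y$--axis meets $\mathcal{S}_{\omega,r_2}\cup\mathcal{F}_{\omega,r_2}$ as soon as $\omega(0)=0$. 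Hence $x+(\vartheta+1)(\phi(\varepsilon)+\phi(-\eta))\xi_y\notin Y^{\vartheta\varepsilon}$, and the blowing inclusion $\chi_y(\mathcal{O}_{\varepsilon}(\Omega)\cap\mathcal{B}_{3\psi(r)}(y))\subset Y^{\vartheta\varepsilon}$ delivers the assertion.

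I expect the main obstacle to be the bookkeeping of the comparison factor $\vartheta$ in opposite directions for the contraction and the blowing, so that both the hypothesis and the conclusion of Lemma~\ref{lem_tran} land on the correct side, together with checking that the axial displacement genuinely lies in $\mathcal{C}_{\omega,r_2}(\xi_y)$ and within the prescribed ball. The factor $(\vartheta+1)$ in the statement is exactly what the inequality $\phi(\vartheta s)\leq(\vartheta+1)\phi(s)$ forces, and keeping it consistent across all its occurrences is the delicate point.
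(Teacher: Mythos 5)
Your core scheme is exactly the paper's: pass to the chart, convert the geodesic inflation/contraction into Euclidean ones with the comparison factor $\vartheta$ from \textbf{\textit{(W4)}}--\textbf{\textit{(W5)}} (your two inclusions $\chi_y\left(\mathcal{O}_{\varepsilon}(\Omega)\cap\mathcal{B}_{3\psi(r)}(y)\right)\subset Y^{\vartheta\varepsilon}$ and $Y^{\vartheta\eta}\subset\chi_y\left(\mathcal{O}_{\eta}(\Omega)\right)$ are the correct ones, and in the right logical directions), and apply Lemma~\ref{lem_tran} to $Z=Y$ with parameters $\vartheta\varepsilon$, $\vartheta\eta$; the subadditivity bound $\phi(\vartheta s)\leq(\vartheta+1)\phi(s)$ that verifies the lemma's hypotheses is also right. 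You correctly notice what the paper's two-line proof silently skips: Lemma~\ref{lem_tran} only yields the shift $\phi(\vartheta\varepsilon)+\phi(\vartheta|\eta|)$, which is \emph{smaller} than the stated $(\vartheta+1)(\phi(\varepsilon)+\phi(-\eta))$, so an extra argument is needed to move the point further out along $\xi_y$.

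The gap is in how you supply that extra argument. You give $Y^{\vartheta\varepsilon}$ the cusp property with parameter $r_2=\psi^{-1}\left(\psi(r)/2\right)$ via Lemma~\ref{lem_39} and then add the residual axial vector $c$ using \textbf{\textit{(W2)}}. But \textbf{\textit{(W2)}} at parameter $r_2$ is a statement about the windows $B_{3\psi(r_2)}(\chi_y(y))=B_{3\psi(r)/2}(\chi_y(y))$ and $B_{2\psi(r_2)}(\chi_y(y))=B_{\psi(r)}(\chi_y(y))$: writing $w=x+\left[\phi(\vartheta\varepsilon)+\phi(\vartheta|\eta|)\right]\xi_y$, to conclude $w+c\notin Y^{\vartheta\varepsilon}$ you need $w\in B_{3\psi(r)/2}$ \emph{and} the landing point $w+c\in B_{\psi(r)}$; otherwise \textbf{\textit{(W2)}} only says $w+c$ avoids $Y^{\vartheta\varepsilon}\cap B_{\psi(r)}$. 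Since the corollary lets $x$ range over $B_{2\psi(r)}(\chi_y(y))$, your step is licensed only for $x$ within roughly $\psi(r)/2$ of the center and genuinely fails to apply, as written, on most of the stated range. The repair stays inside your toolkit but must be run at scale $r$, not $r_2$: the first half of Lemma~\ref{lem_tran}'s proof shows the intermediate point $t=x+\phi(\vartheta|\eta|)\xi_y$ is not in $Y$, so \textbf{\textit{(W2)}} for $Y$ itself (parameter $r$, windows $B_{3\psi(r)}$ and $B_{2\psi(r)}$, matching the hypothesis) makes $t+\mathcal{C}_{\omega,r}(\xi_y)$ avoid $Y$ there; then the mild generalization of Remark~\ref{rem_310}, namely $B_{\varepsilon'}(s\xi)\subset\mathcal{C}_{\omega,r}(\xi)$ whenever $\phi(\varepsilon')\leq s$ and $s+\varepsilon'<\psi(r)$, applies with $\varepsilon'=\vartheta\varepsilon$ and $s=(\vartheta+1)(\phi(\varepsilon)+\phi(-\eta))-\phi(\vartheta|\eta|)\geq\phi(\vartheta\varepsilon)$, placing the ball $B_{\vartheta\varepsilon}$ around the target point inside that cone, hence outside $Y^{\vartheta\varepsilon}$ --- for all $x$ in the stated window, up to the chart-edge effects that the paper itself systematically ignores.
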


Similarly to checking of the previous corollary by setting $Z = \chi_y \left(\Omega \cap \mathcal{B}_{3\psi(r)}(y)\right)$, $R = r$, $\rho = \psi(r)$ and applying lemma \ref{lem_tran} one has
$$
x \in \chi_y \left(\mathcal{O}_{\eta}(\Omega) \cap \mathcal{B}_{2\psi(r)}(y)\right) \subset B_{2\psi(r)} (y) \backslash Y^{\vartheta \eta} \,\,\Rightarrow \,\,x + \left[ \phi(-\vartheta \eta) + \phi(\vartheta \varepsilon)  \right] \xi_y \notin Y^{\vartheta \varepsilon},
$$
it remains to note that $Y^{\vartheta \varepsilon} \subset \chi_y\left( \mathcal{O}_{\varepsilon}(\Omega) \cap \mathcal{B}_{3\psi(r)}(y)\right)$.

\section{Proof of the basic results.}

\subsection{Resolvent convergence.}

\begin{lemma}
\label{lem_41}
If $\Omega \in \mathcal{W}^{\omega}_{r,\vartheta}$, $\omega(0) = 0$, $0 < -\eta, \varepsilon < \phi^{-1}\left( \frac{\psi(R)}{4(\vartheta + 1)} \right)$, $\varepsilon \leq 
\mathrm{dist}(\Omega, \partial M_O)$. Then for arbitrary $f \in L_2(M)$ and $\Lambda = \mathcal{O}_{\varepsilon}(\Omega) \backslash \mathcal{O}_{\eta}(\Omega)$ there exists $w_\eta \in V_{\mathcal{O}_{\eta}(\Omega)}$ and $\bar{C} = \bar{C}(M_O, \vartheta, r, \mathcal{A})$ such that
$$
\|u_\varepsilon - w_\eta \|_V^2 \leq \bar{C}\cdot \left( \phi (\varepsilon) + \phi (-\eta) \right)  \|u_\varepsilon\|_{V_{\mathcal{O}_{3 \vartheta \psi(r)}(\Lambda)}} \left( \|u_\varepsilon\|_{V_{\mathcal{O}_{3 \vartheta \psi(r)}(\Lambda)}} + \| f \|_{L_{\mathcal{O}_{3 \vartheta \psi(r)}(\Lambda)}} \right),
$$
where $u_\varepsilon = \mathcal{G}(f; V_{\mathcal{O}_{\varepsilon}(\Omega)})$.
\end{lemma}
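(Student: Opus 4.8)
The plan is to obtain $w_\eta$ as the output of Proposition~\ref{prop_36} applied with the ambient open sets $Y = \mathcal{O}_{\varepsilon}(\Omega)$ and $X = \mathcal{O}_{\eta}(\Omega)$, the function $v = u_\varepsilon$, and the translation field
$$
\nu(y) = (\vartheta + 1)\bigl(\phi(\varepsilon) + \phi(-\eta)\bigr)\,\xi_y ,
$$
so that $\Lambda = Y \backslash X$ is exactly the set named in the statement. Before invoking that proposition I must fix the technical atlas it refers to and check its two hypotheses. First I would record that, by Corollary~\ref{obvious}, the inflated set $\mathcal{O}_{\varepsilon}(\Omega)$ again satisfies a uniform $\omega$-cusp condition, now with parameter $r_2 = \psi^{-1}(\psi(r)/2)$ and an enlarged modulus, the cusp directions $\xi_y$ being inherited from $\Omega$; the hypotheses $\varepsilon,-\eta < \phi^{-1}(\psi(r)/(4(\vartheta+1)))$ and $\varepsilon \le \mathrm{dist}(\Omega, \partial M_O)$ guarantee both that this membership is legitimate and that $\mathcal{O}_{\varepsilon}(\Omega) \subset M_O$, so that $u_\varepsilon \in V$. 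Consequently the $(\psi(r_2),\vartheta)$-technical atlas associated with this class provides the radius $\rho = \psi(r_2) = \psi(r)/2$ to be used in Proposition~\ref{prop_36}.

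Next I would verify the two requirements of Proposition~\ref{prop_36}. The geometric non-intersection condition is precisely the conclusion of Corollary~\ref{cor_tran}: for $y \in \Lambda$ and $x \in \chi_y(\mathcal{B}_{\rho}(y)\backslash X) \subset \chi_y(\mathcal{B}_{2\psi(r)}(y)\backslash \mathcal{O}_{\eta}(\Omega))$ one gets $x + \nu(y) \notin \chi_y(\mathcal{O}_{\varepsilon}(\Omega)\cap \mathcal{B}_{3\psi(r)}(y)) \supset \chi_y(Y \cap \mathcal{B}_{3\rho}(y))$, which is exactly what is needed for the translated functions to vanish off $X$ and hence for $w \in V_X$. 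The local translation estimate is supplied by Theorem~\ref{th_38} with $Z = \mathcal{O}_{\varepsilon}(\Omega)$, $u = u_\varepsilon$, and $h = \nu(y)$; the bounds on $\varepsilon, -\eta$ ensure $|h| = (\vartheta+1)(\phi(\varepsilon)+\phi(-\eta)) < \psi(r)/2 = \psi(r_2)$ and $\rho < \psi(r_2)$, so the theorem applies and yields
$$
\| u_\varepsilon - (u_\varepsilon)^y_{\nu(y)} \|^2_{V_{\mathcal{B}_{\rho}(y)}} \le \tilde{C}\,|h|\,\| u_\varepsilon \|_{V_{\mathcal{B}_{3\rho}(y)}}\Bigl( \| u_\varepsilon \|_{V_{\mathcal{B}_{3\rho}(y)}} + \| f \|_{L_{\mathcal{B}_{2\rho}(y)}}\Bigr).
$$
The right-hand side furnishes the majorant required by Proposition~\ref{prop_36}; since $|h|$ is independent of $y$ it factors out, leaving the $\phi(\varepsilon)+\phi(-\eta)$ dependence that appears in the claim.

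Finally I would assemble the global bound. Running Proposition~\ref{prop_36} produces $w_\eta = w \in V_{\mathcal{O}_{\eta}(\Omega)}$ together with a control of $\|u_\varepsilon - w_\eta\|_V^2$ by a finite sum of the local estimates over the covering balls extracted there; because $\rho = \psi(r)/2$ and the atlas is $(\rho,\vartheta)$-technical, each $\mathcal{B}_{3\rho}(y)$ lies in a geodesic neighbourhood of radius $\le 3\vartheta\psi(r)$, so all these balls sit inside $\mathcal{O}_{3\vartheta\psi(r)}(\Lambda)$ and the norms on the right may be replaced by norms over that set. I expect the main obstacle to be keeping the product structure $\|u_\varepsilon\|_V(\|u_\varepsilon\|_V + \|f\|_L)$ of the statement: the $L_1$-majorant interface of Proposition~\ref{prop_36} naturally returns a sum of squared norms, so to recover the stated product one should apply the Cauchy--Schwarz inequality to the finite sum over the covering (using its bounded overlap multiplicity) rather than split the cross term $\|u_\varepsilon\|_V\|f\|_L$ by Young's inequality. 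Collecting the constant $\tilde{C}$, the overlap multiplicity, and the covering cardinality bound $J < C_2(M,\vartheta,\rho)$ from Proposition~\ref{prop_36} into a single $\bar{C} = \bar{C}(M_O,\vartheta,r,\mathcal{A})$ --- none of which depends on $\omega$ or $f$ --- then gives the asserted inequality.
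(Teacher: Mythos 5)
Your overall skeleton is exactly the paper's: Corollary \ref{obvious} to place $\mathcal{O}_{\varepsilon}(\Omega)$ in a cusp class with parameter $r_2=\psi^{-1}(\psi(r)/2)$, Theorem \ref{th_38} with $Z=\mathcal{O}_{\varepsilon}(\Omega)$ for the local translation estimate, Corollary \ref{cor_tran} for the non-intersection hypothesis, and Proposition \ref{prop_36} with the same choices $Y=\mathcal{O}_{\varepsilon}(\Omega)$, $X=\mathcal{O}_{\eta}(\Omega)$, $v=u_\varepsilon$, $\nu(y)=(\vartheta+1)(\phi(\varepsilon)+\phi(-\eta))\xi_y$. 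The gap is in your final assembly step. Proposition \ref{prop_36} does not hand you ``a control of $\|u_\varepsilon-w_\eta\|_V^2$ by a finite sum of the local estimates over the covering balls'': its stated conclusion is $\|w-v\|_V^2\leq C\|H\|_{L_1(\Lambda^{3\rho})}$ for a single $L_1$ majorant $H$, and the covering, its cardinality and its overlap are all internal to that proposition's proof. So there is no finite sum visible at the interface to which you could apply Cauchy--Schwarz; to do what you propose you would have to reopen and re-prove Proposition \ref{prop_36}. Worse, the problem appears \emph{before} that: to invoke the proposition at all you must first convert the product bound of Theorem \ref{th_38} into the form $\|H\|_{L_1(\mathcal{B}_{3\rho}(y))}$ for some fixed function $H$, and a pointwise product such as $\mathfrak{a}(x,\nabla u_\varepsilon)^{1/2}\bigl(\mathfrak{a}(x,\nabla u_\varepsilon)^{1/2}+\sqrt{\mathrm{p}}\,|f|\bigr)$ does not work: by Cauchy--Schwarz its integral is bounded \emph{above} by the product of norms, which is the wrong direction for a majorant.

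The paper closes this step while staying inside the interface, and it uses precisely the device you rejected --- Young's inequality --- but with a free parameter. For every $\kappa>0$ the right-hand side of Theorem \ref{th_38} is dominated by $\|H_\kappa\|_{L_1(\mathcal{B}_{3\rho}(y))}$ with the pointwise majorant
$$
H_\kappa = c\,(\vartheta+1)\bigl(\phi(\varepsilon)+\phi(-\eta)\bigr)\Bigl[(1+\kappa)\,\mathfrak{a}(x,\nabla u_\varepsilon)+\kappa^{-1}\mathrm{p}\,|f|^2\Bigr],
$$
so Proposition \ref{prop_36} yields, for one and the same $w_\eta$ (the proposition explicitly states $w$ is independent of the choice of $H$; alternatively $\kappa$ may be fixed in advance, since the optimal value depends only on global norms),
$$
\|u_\varepsilon-w_\eta\|_V^2 \leq C\bigl(\phi(\varepsilon)+\phi(-\eta)\bigr)\Bigl[(1+\kappa)\|u_\varepsilon\|^2_{V_{\Lambda^{3\rho}}}+\kappa^{-1}\|f\|^2_{L_{\Lambda^{3\rho}}}\Bigr].
$$
Choosing $\kappa=\|f\|_{L_{\Lambda^{3\rho}}}/\|u_\varepsilon\|_{V_{\Lambda^{3\rho}}}$ turns the bracket into $\|u_\varepsilon\|_V^2+2\|u_\varepsilon\|_V\|f\|_L\leq 2\|u_\varepsilon\|_V\bigl(\|u_\varepsilon\|_V+\|f\|_L\bigr)$, which is exactly the product structure of the claim. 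So your diagnosis (a fixed-parameter Young split loses the product) is correct, but the remedy is global optimization of the Young parameter after Proposition \ref{prop_36}, not Cauchy--Schwarz inside a covering you cannot access. The remaining ingredients of your write-up --- the smallness bound $|h|<\psi(r)/2$, the role of $\varepsilon\leq\mathrm{dist}(\Omega,\partial M_O)$, and the passage from $\Lambda^{3\rho}$ to $\mathcal{O}_{3\vartheta\psi(r)}(\Lambda)$ via the chart distortion bound --- agree with the paper.
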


\begin{proof}
Since $\Omega \in \mathcal{W}^{\omega}_{r,\vartheta}$ it follows from corollary\ref{obvious} that $\mathcal{O}_{\varepsilon}(\Omega) \in \mathcal{W}^{\omega + (\vartheta + 1)\phi(\varepsilon)}_{r_2,\vartheta}$, $r_2 = \psi^{-1}\left( \frac{\psi(r)}{2} \right)$. Applying theorem \ref{th_38} for $Z = \mathcal{O}_{\varepsilon}(\Omega)$ one can find $\tilde{C} = \tilde{C}(M_O, r, \vartheta, \mathcal{A})$, such that the following estimate holds
\begin{align*}
\|v - v^y_{\nu(y)}\|^2_{V_{\mathcal{B}_{\rho}(y)}} \leq \tilde{C} \|v\|_{V_{\mathcal{B}_{3 \rho}(y)}} \left[  \|v\|_{V_{\mathcal{B}_{3 \rho}(y)}} + \| f \|_{L_{Z \cap \mathcal{B}_{2\rho}(y)}} \right] \cdot (\vartheta + 1)(\phi(\varepsilon) + \phi(-\eta)) = \\
\tilde{C} \cdot (\vartheta + 1)(\phi(\varepsilon) + \phi(-\eta)) \left( \int_Z \textbf{A}(\nabla v, \nabla v) d\mu  + \left(\int_{\mathcal{B}_{3 \rho}(y)} \!\!\!\!\!\!\!\textbf{A}(\nabla v, \nabla v) d\mu \int_{Z\cap \mathcal{B}_{2\rho}(y)} f d\mu \right)^{1/2}\right) \leq\\
\sqrt{2}\tilde{C} \cdot (\vartheta + 1)(\phi(\varepsilon) + \phi(-\eta)) \left[(1+\kappa) \int_Z \textbf{A}(\nabla v, \nabla v) d\mu + \frac{1}{\kappa} \int_{\mathcal{B}_{3 \rho}(y)} f d\mu \right],
\end{align*}
where $v = u_\varepsilon$, $\nu(y) = (\vartheta + 1)(\phi(\varepsilon) + \phi(-\eta)) \xi_y$. Thus, it follows from \ref{cor_tran} that one can use proposition \ref{prop_36} for
\begin{align*}
Y = \mathcal{O}_{\varepsilon}(\Omega),\,\,\,\,\, X = \mathcal{O}_{\eta}(\Omega),\,\,\,\,\, v=u_\varepsilon, \,\, \,\,\, \nu(y) = (\vartheta + 1)(\phi(\varepsilon) + \phi(-\eta)) \xi_y\\
H = \sqrt{2}\tilde{C} \cdot (\vartheta + 1)(\phi(\varepsilon) + \phi(-\eta)) \left[(1+\kappa) \textbf{A}(\nabla v, \nabla v)  + \frac{1}{\kappa} f \right],
\end{align*}
and get that there exists a function $w_\eta \in V_{\mathcal{O}_{\eta}(\Omega)}$ such that
$$
\|u_\varepsilon - w_\eta \|_V^2 \leq  \sqrt{2}\tilde{C} \cdot (\vartheta + 1)(\phi(\varepsilon) + \phi(-\eta)) \cdot \left( (1 + \kappa)\|u_\varepsilon\|^2_{V_{\Lambda^{3\psi(r)}}} + \kappa^{-1} \| f \|_{L_{\Lambda^{3\psi(r)}}}^2 \right).
$$
Choosing $\kappa = \frac{\| f \|_{L_{\Lambda^{3\psi(r)}}}}{\|u_\varepsilon\|_{V_{\Lambda^{3\psi(r)}}}}$ we obtain the required inequality.
\end{proof}

Now the following resolvent continuity will be shown with respect to domain perturbation.

\begin{theorem}
\label{th_42}
Let $\Omega_1 \in \mathcal{W}^{\omega}_{r,\vartheta}$, $\omega(0) = 0$, $f\in L_2(M)$, $u_i = \mathcal{G}(f;V_{\Omega_i})$, $i = 1,2$, then for sufficiently small $\epsilon = e(\Omega_2 \Delta \Omega_1, \partial \Omega_1)$ there exists a constant $\Gamma = \Gamma(M_O, \mathcal{W}^{\omega}_{r,\vartheta}, \mathcal{A})$ such that
\begin{equation}
\label{eq_20}
\| u_1 - u_2 \|_V^2 \leq \Gamma \cdot \phi(\epsilon) \| f \|_{L'} \| f \|_{V'}.
\end{equation}
If in addition $\Omega_2 \in \mathcal{W}^{\omega}_{r,\vartheta}$ and $
d_{\mathcal{HS}}(\Omega_1, \Omega_2)$ is sufficiently small then
$$
\| u_1 - u_2 \|_V^2 \leq \Gamma \cdot \phi(d_{\mathcal{HS}}(\Omega_1, \Omega_2)) \| f \|_{L'} \| f \|_{V'}.
$$
\end{theorem}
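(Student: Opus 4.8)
The plan is to trap $\Omega_2$ between an inner contraction and an outer dilation of $\Omega_1$ and then reduce the whole estimate to the abstract distance bound of Lemma \ref{lem_32}. First I would observe that $\epsilon=e(\Omega_2\Delta\Omega_1,\partial\Omega_1)$ bounds the deviation of $\Omega_2$ from $\Omega_1$ on \emph{both} sides of $\partial\Omega_1$: every point of $\Omega_2\setminus\Omega_1$ and every point of $\Omega_1\setminus\Omega_2$ lies within $\epsilon$ of $\partial\Omega_1$. This yields the inclusions $\mathcal{O}_{-\epsilon}(\Omega_1)\subset\Omega_1\cap\Omega_2$ and $\Omega_1\cup\Omega_2\subset\mathcal{O}_{\epsilon}(\Omega_1)$, and hence, after extension by zero, the Sobolev inclusions $V_{\mathcal{O}_{-\epsilon}(\Omega_1)}\subset V_{\Omega_1}\cap V_{\Omega_2}$ and $V_{\Omega_1},V_{\Omega_2}\subset V_{\mathcal{O}_{\epsilon}(\Omega_1)}=:V^{1,2}$. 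Since the present normalisation makes $\Phi$ coincide with the square of the $V$-norm (so $\alpha=\beta=1$), feeding $u_\epsilon=\mathcal{G}(f;V^{1,2})$ into (\ref{eq_9}) of Lemma \ref{lem_32} gives $\|u_1-u_2\|_V\le d_V(u_\epsilon,V_{\Omega_1})+d_V(u_\epsilon,V_{\Omega_2})$.

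The heart of the argument is to dominate \emph{both} distances by a single correction. For this I would apply Lemma \ref{lem_41} to the base domain $\Omega_1$ with dilation $\varepsilon=\epsilon$ and contraction $\eta=-\epsilon$ (admissible once $\epsilon<\phi^{-1}(\psi(r)/(4(\vartheta+1)))$ and $\epsilon\le\mathrm{dist}(\Omega_1,\partial M_O)$, which is what ``sufficiently small $\epsilon$'' means). It furnishes $w\in V_{\mathcal{O}_{-\epsilon}(\Omega_1)}$, so that $w$ lies in $V_{\Omega_1}\cap V_{\Omega_2}$, together with an estimate of $\|u_\epsilon-w\|_V^2$ by $\mathrm{const}\cdot\phi(\epsilon)\,\|u_\epsilon\|_{V_\Lambda}(\|u_\epsilon\|_{V_\Lambda}+\|f\|_{L_\Lambda})$, the factor $\phi(\varepsilon)+\phi(-\eta)$ collapsing to $2\phi(\epsilon)$. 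Because $w$ belongs to both spaces, each distance is $\le\|u_\epsilon-w\|_V$, whence $\|u_1-u_2\|_V^2\le4\|u_\epsilon-w\|_V^2$. It then remains to convert the energy factors into $\|f\|_{L'}\|f\|_{V'}$: from $\|u_\epsilon\|_V^2=\Phi(u_\epsilon,u_\epsilon)=\langle f,u_\epsilon\rangle$ and (\ref{Lax}) one reads off $\|u_\epsilon\|_V\le\|f\|_{V'}$, while testing the same identity against $L$ and using the ellipticity bound \textbf{A1} with Friedrichs' constant $\mathrm{p}$ gives $\|u_\epsilon\|_V\lesssim\|f\|_{L'}$ and $\|f\|_L=\mathrm{p}\|f\|_{L'}$; replacing the local norms by the global ones and inserting these two bounds produces (\ref{eq_20}).

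For the second assertion I would exploit the symmetry of the construction. The first estimate uses only $\Omega_1\in\mathcal{W}^\omega_{r,\vartheta}$, so interchanging $\Omega_1$ and $\Omega_2$ (now invoking $\Omega_2\in\mathcal{W}^\omega_{r,\vartheta}$) gives the same bound with $\epsilon'=e(\Omega_1\Delta\Omega_2,\partial\Omega_2)$. Finally, Lemma \ref{lem_312} asserts that for two domains of class $\mathcal{W}^\omega_{r,\vartheta}$ the four Hausdorff-type quantities entering $d_\mathcal{HS}$ are mutually comparable through $\phi$, namely $d^\mathcal{HP}(\Omega_1,\Omega_2)\le\phi(d_\mathcal{HS}(\Omega_1,\Omega_2))$; combined with the two-sided controls $\check e(\Omega_1,\Omega_2)\le\phi(e(\Omega_1,\Omega_2))$ and its mirror image, this lets one pass from the cheaper of the two estimates to one governed by the weakest distance $d_\mathcal{HS}$. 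I expect the main obstacle to be precisely this last step: guaranteeing that the resulting modulus is $\phi(d_\mathcal{HS})$ itself rather than a composite such as $\phi\circ\phi$, which forces one to choose the base domain (and the direction in which the collar is opened) according to which of the four quantities realises the minimum, and to lean on the two-sided estimates of Lemma \ref{lem_312} rather than on a one-sided bound. A minor point I would dispose of at the outset is the borderline behaviour of the two inclusions at distance exactly $\epsilon$, absorbed by enlarging the collar by an arbitrarily small amount.
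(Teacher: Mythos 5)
Your proof of the first estimate (\ref{eq_20}) is correct and is essentially the paper's own argument: inequality (\ref{eq_9}) of Lemma \ref{lem_32} with $V^{1,2}=V_{\mathcal{O}_{\epsilon}(\Omega_1)}$, then Lemma \ref{lem_41} applied to $\Omega_1$ with outer collar $\epsilon$ and inner collar $-\epsilon$ to produce a single $w\in V_{\mathcal{O}_{-\epsilon}(\Omega_1)}\subset V_{\Omega_1}\cap V_{\Omega_2}$ dominating both distances, and finally (\ref{Lax}) together with the duality relations $\|u_\epsilon\|_V\le\|f\|_{V'}$, $\|u_\epsilon\|_V\lesssim\|f\|_{L'}$, $\|f\|_{L}=\mathrm{p}\|f\|_{L'}$ to assemble the product $\|f\|_{L'}\|f\|_{V'}$. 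The paper uses the slightly sharper collar widths $e(\Omega_2,\Omega_1)$ and $\check{e}(\Omega_1,\Omega_2)$, both $\le\epsilon$, but this difference is immaterial.

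The second statement is where your proposal has a genuine gap, and you half-see it yourself. Since $d_{\mathcal{HS}}$ is a \emph{minimum} of four quantities, the theorem requires a separate direct estimate for each of them; your argument delivers only the two one-sided ones, $e(\Omega_1\Delta\Omega_2,\partial\Omega_1)$ and, by symmetry, $e(\Omega_1\Delta\Omega_2,\partial\Omega_2)$. For the remaining cases you propose to transfer these bounds through the comparability of Lemma \ref{lem_312}; as you yourself observe, this can only produce the modulus $\phi\circ\phi$, which is genuinely weaker than $\phi$ whenever $\omega$ is not Lipschitz-like (for $\omega(t)=\sqrt{t}$ one gets $t^{1/4}$ in place of $t^{1/2}$), and your proposed repair (``choose the base domain and the direction of the collar'') is never instantiated. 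What is actually missing are the other two abstract inequalities of Section 3.2, which you never invoke. For $d^{\mathcal{H}}(\Omega_1,\Omega_2)=\max\{\check{e}(\Omega_2,\Omega_1),\check{e}(\Omega_1,\Omega_2)\}$ the paper uses (\ref{eq_8}): with $\check{\eta}=\check{e}(\Omega_1,\Omega_2)$ and $\check{\varepsilon}=\check{e}(\Omega_2,\Omega_1)$, both $V_{\mathcal{O}_{-\check{\eta}}(\Omega_1)}$ and $V_{\mathcal{O}_{-\check{\varepsilon}}(\Omega_2)}$ lie in $V_1\cap V_2$, and Lemma \ref{lem_41} with vanishing outer collar bounds $d_V(u_1,V_{\mathcal{O}_{-\check{\eta}}(\Omega_1)})^2$ by $\Gamma\,\phi(\check{\eta})\,\|f\|_{L'}\|f\|_{V'}$ and likewise for $u_2$ (here the hypothesis $\Omega_2\in\mathcal{W}^{\omega}_{r,\vartheta}$ is used to contract $\Omega_2$). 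For $d_{\mathcal{H}}(\Omega_1,\Omega_2)=\max\{e(\Omega_2,\Omega_1),e(\Omega_1,\Omega_2)\}$ one cannot use inner collars at all, since the contraction defects $\check{\varepsilon},\check{\eta}$ are not directly controlled by $d_{\mathcal{H}}$; the paper instead uses Corollary \ref{cor_33} with the two outer spaces $V^{1,2}=V_{\mathcal{O}_{\varepsilon}(\Omega_1)}$ and $V^{2,1}=V_{\mathcal{O}_{\eta}(\Omega_2)}$, each containing $V_1\cup V_2$, again combined with Lemma \ref{lem_41} with pure dilation. In the paper Lemma \ref{lem_312} plays only the auxiliary role of guaranteeing that smallness of $d_{\mathcal{HS}}$ forces smallness of all four quantities, so that the hypotheses of Lemma \ref{lem_41} hold in each branch; it is never used to convert one estimate into another, precisely because of the $\phi\circ\phi$ loss you identified. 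Without the (\ref{eq_8}) and Corollary \ref{cor_33} branches, the claimed bound by $\phi(d_{\mathcal{HS}})$ is not proved.
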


\begin{proof}
Set
$$
\varepsilon = e(\Omega_2, \Omega_1), \,\,\,\, \check{\varepsilon} = \check{e}(\Omega_2, \Omega_1), \,\,\,\, \eta = e(\Omega_1, \Omega_2), \,\,\,\, \check{\eta} = \check{e}(\Omega_1, \Omega_2).
$$
To establish (\ref{eq_20}) we use inequality (\ref{eq_9}) of lemma \ref{lem_32}, where
$$
V_1 = V_{\Omega_1}, \,\,\,\, V_2 = V_{\Omega_2}, \,\,\,\, V^{1,2} = V_{\mathcal{O}_{\varepsilon}(\Omega_1)}, \,\,\,\, \alpha = \beta = 1.
$$
As $V_{\mathcal{O}_{-\check{\eta}}(\Omega_1)} \subset V_1 \cap V_2$ and $d(u^{1,2}, V_2) \leq d(u^{1,2}, V_{\mathcal{O}_{-\check{\eta}}(\Omega_1)})$ one has that
$$
\| u_1 - u_2 \|_{V} \leq \left( d_V(u^{1,2}, V_{\Omega_1}) + d_V(u^{1,2}, V_{\Omega_2}) \right) \leq 2 d_V\left(u^{1,2}, V_{\mathcal{O}_{-\check{\eta}}(\Omega_1)}\right).
$$
Now to get estimate (\ref{eq_20}) it is sufficient to use lemma \ref{lem_41} and inequality (\ref{Lax}).

If in addition $\Omega_2 \in \mathcal{W}^{\omega}_{r, \vartheta}$ then interchanging $\Omega_1$ and $\Omega_2$ in the above arguments one obtains the estimate
$$
\| u_1 - u_2 \|_V^2 \leq \Gamma \cdot \phi\left(\min\{ e(\Omega_1 \Delta \Omega_2, \partial \Omega_1),  e(\Omega_1 \Delta \Omega_2, \partial \Omega_2)\}\right) \| f \|_{L'} \| f \|_{V'}.
$$
We notice incidentally that \ref{lem_312} enables one to obtain sufficient smallness of $d^{\mathcal{HP}}(\Omega_1, \Omega_2)$. Hence it sufficient to establish the following estimates
\begin{align}
\label{haus1}
\| u_1 - u_2 \|_V^2 \leq \Gamma \cdot \phi\left( d^\mathcal{H}(\Omega_1, \Omega_2) \right) \| f \|_{L'} \| f \|_{V'},\\
\label{haus2}
\| u_1 - u_2 \|_V^2 \leq \Gamma \cdot \phi\left( d_\mathcal{H}(\Omega_1, \Omega_2) \right) \| f \|_{L'} \| f \|_{V'}.
\end{align}
Now, the first one follows from lemma \ref{lem_41}, inequality (\ref{Lax}) and estimate (\ref{eq_8}) in lemma \ref{lem_32}:
\begin{align*}
V_{\mathcal{O}_{-\check{\eta}} (\Omega_1)} \subset V_1 \cap V_2, \,\,\,\, d_V(u_1, V_1 \cap V_2)^2 \leq d_V(u_1, V_{\mathcal{O}_{-\check{\eta}}(\Omega_1)})^2 \leq \Gamma \cdot \phi\left( \check{\eta} \right) \| f \|_{L'} \| f \|_{V'},\\
V_{\mathcal{O}_{-\check{\varepsilon}} (\Omega_2)} \subset V_1 \cap V_2, \,\,\,\, d_V(u_2, V_1 \cap V_2)^2 \leq d_V(u_2, V_{\mathcal{O}_{-\check{\varepsilon}}(\Omega_2)})^2 \leq \Gamma \cdot \phi\left( \check{\varepsilon} \right) \| f \|_{L'} \| f \|_{V'}.
\end{align*}
As regards (\ref{haus2}) it follows again from lemma \ref{lem_41}, inequality (\ref{Lax}) and corollary \ref{cor_33} where
$$
V^{1,2} = V_{\mathcal{O}_{\varepsilon}(\Omega_1)}, \,\,\,\, V^{2,1} = V_{\mathcal{O}_{\eta}(\Omega_2)}.
$$
\end{proof}

Resolvent continuity in $\Omega$ established above enables one to estimate distance between eigenspaces of the spectral boundary value problems for $\mathcal{A}$ with close domains $\Omega_1$ and $\Omega_2$. Namely, let $\partial \Omega_1 \in C^{0,\omega}$, $\nu_k$ be $k$-th (in decreasing order) eigenvalue of the operator $\mathcal{G}(\cdot, V_{\Omega_1})$ without taking multiplicity into account,  $E_k(\Omega_1)$ be the corresponding eigenspace, number $r > 0$ be such that $B_{2r}(\nu_k) \cap \mathrm{spec}(\mathcal{G}(\cdot, V_{\Omega_1})) = \{ \nu_k \}$. Then there exists $\delta = \delta(r, \nu_k) > 0$ so that as soon as $\epsilon = e(\Omega_1 \Delta \Omega_2, \partial \Omega_1) < \delta$ for the generalized angle (see, for instance, \cite{Kato})
$$
\tilde{\delta}_V(A, B) = \max \left\{ \sup_{u \in A, \| u \|_V = 1} d_V (u, B), \sup_{u\in B, \|u\|_V = 1} d_V(u, A) \right\}
$$
between subspaces $A, B \subset V$ the following estimate takes place
$$
\tilde{\delta}_V(E_k(\Omega_1), E_k(\Omega_2)) \leq \Gamma \cdot C(\nu_k, r) \cdot \phi(\epsilon)^{1/2},
$$
where $E_k(\Omega_2)$ is the range of the operator
$$
\frac{1}{2\pi i} \int_{|\nu_k - \xi| = r} \left( \mathcal{G}(\cdot, V_{\Omega_2}) - \xi \right)^{-1} d\xi.
$$

\subsection{Proof of the Theorem \ref{result}}

By means of Proposition \ref{classif} for domain $\Omega_1$ with the boundary of class $C^{0,\omega}$ one has a class $\mathcal{W}^{C \omega}_{r,\vartheta}$ (without loss of generality one can assume that $C = 1$). Let $u_n^{(i)} \in V_{\Omega_1}$, $i = 1,2$, be eigenfunctions associated with eigenvalues $\lambda_n^{(i)}$ of the problem (\ref{SpecDir}) considered in domains $\Omega_i$ and $\bar{u}_n^{(1)} \in \Sob(\Omega_2)$ be weak solution of the equation
$$
\mathcal{A} u = \lambda_{n}^{(1)} u_n^{(1)},\,\,\,\,\,\, u\in \Sob(\Omega_2).
$$
Taking advantage of the Theorem \ref{th_42} one has the estimate
$$
\| u_n^{(1)} - P_{\Omega_2} u_n^{(1)} \|^2_{V} \leq  \| u_n^{(1)} -  \bar{u}_n^{(1)} \|^2_{V} \leq \Gamma \cdot \phi (e(\Omega_1 \Delta \Omega_2, \partial \Omega_1)) \| \lambda_{n}^{(1)} u_n^{(1)} \|_{L'} \| \lambda_{n}^{(1)} u_n^{(1)} \|_{V'},
$$
besides the following inequality holds:
$$
\| \lambda_{n}^{(i)} u_{n}^{(i)} \|_{H^{-1}(M_O)} \leq C(n,\mathcal{A}, \mathrm{p}) \sqrt{\lambda_{n}^{(i)}} \| u_{n}^{(i)} \|_{L_2(M_O)}.
$$
Since $\lambda_{n}^{(1)} \leq \gamma_n$ does not exceed the $n$-th eigenvalue of the problem (\ref{SpecDir}), with $\Omega$ being a ball contained in $\Omega_1 \cap \Omega_2$, one has
$$
\| u_j^{(1)} - P_{\Omega_2} u_j^{(1)} \|^2_{V} \leq  C_j \phi(e(\Omega_1 \Delta \Omega_2, \partial \Omega_1)), \,\,\,\,\,\, j = \overline{1, \ldots, n}
$$
Now applying lemma \ref{Birkhoff} one derives the estimate from below for $\lambda_n^{(1)}$. To obtain similar estimate for $\lambda_n^{(2)}$ we write
$$
\| u_n^{(2)} - P_{\Omega_1} u_n^{(2)} \|^2_{V} \leq  \| u_n^{(2)} -  \bar{u}_n^{(2)} \|^2_{V} \leq \Gamma \cdot \phi (e(\Omega_1 \Delta \Omega_2, \partial \Omega_1)) \| \lambda_{n}^{(2)} u_n^{(2)} \|_{L'} \| \lambda_{n}^{(2)} u_n^{(2)} \|_{V'},
$$
where $\bar{u}_n^{(2)} = \mathcal{G}( \lambda_{n}^{(2)} u_n^{(2)}; V_{\Omega_1})$. Carrying on the above arguments we obtain the required conclusion

If in addition $\Omega_2 \in \mathcal{W}^{\omega}_{r, \vartheta}$ then
$$
\| u_n^{(1)} - P_{\Omega_2} u_n^{(1)} \|^2_{V} \leq \Gamma \cdot \phi (d_{\mathcal{HS}}(\Omega_1, \Omega_2)) \| \lambda_{n}^{(1)} u_n^{(1)} \|_{L'} \| \lambda_{n}^{(1)} u_n^{(1)} \|_{V'}
$$

\begin{corollary}[Manifold version of Burenkov--Lamberti theorem]
\label{cor_bur}
Let operator $\mathcal{A}$ on the manifold $(M,g)$ satisfies conditions (\textbf{A1}) -- (\textbf{A2}), $\Omega_1, \Omega_2 \in \mathcal{W}^{\omega}_{r,\vartheta}$, $\omega(0)=0$. Then there exists constants $C_n = C_n(M, \omega, r, \vartheta, \mathcal{A} )>0$,  $\delta_0  = \delta_0(M_O, \omega, r, \vartheta )>0$ such that conditions $\Omega_1 \Subset M_O$, $d_{\mathcal{HS}}(\Omega_1, \Omega_2) \leq \delta_0$ imply inequality
$$
|\lambda_n^{(1)} - \lambda_n^{(2)}| \leq C_n ( \omega (d_{\mathcal{HS}}(\Omega_1, \Omega_2)) + d_{\mathcal{HS}}(\Omega_1, \Omega_2)),
$$
where $\{\lambda^{(i)}_n\}$ are eigenvalues of the problem (\ref{SpecDir}) for domains $\Omega_i$ indexed in ascending order with multiplicities taken into account.
\end{corollary}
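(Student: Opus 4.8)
The plan is to transplant the argument already used for Theorem~\ref{result}, now exploiting the full symmetry afforded by the hypothesis $\Omega_1,\Omega_2\in\mathcal{W}^\omega_{r,\vartheta}$. First I would fix $n$ and let $u_j^{(1)}$, $j=1,\dots,n$, be $L_2$-orthonormal eigenfunctions spanning $S_n^{(1)}$, with eigenvalues $\lambda_j^{(1)}$. For each $j$ I set $\bar u_j^{(1)}=\mathcal{G}(\lambda_j^{(1)}u_j^{(1)};V_{\Omega_2})$ and observe that $u_j^{(1)}=\mathcal{G}(\lambda_j^{(1)}u_j^{(1)};V_{\Omega_1})$, so the \emph{strong} form of Theorem~\ref{th_42}---valid precisely because $\Omega_2\in\mathcal{W}^\omega_{r,\vartheta}$ and $d_{\mathcal{HS}}(\Omega_1,\Omega_2)$ is small---applies with $f=\lambda_j^{(1)}u_j^{(1)}$ to give
$$\|u_j^{(1)}-\bar u_j^{(1)}\|_V^2\le \Gamma\,\phi(d_{\mathcal{HS}}(\Omega_1,\Omega_2))\,\|\lambda_j^{(1)}u_j^{(1)}\|_{L'}\,\|\lambda_j^{(1)}u_j^{(1)}\|_{V'}.$$
Since $P_{\Omega_2}u_j^{(1)}$ is the $V$-nearest point of $V_{\Omega_2}$ and $\bar u_j^{(1)}\in V_{\Omega_2}$, the left side dominates $\|u_j^{(1)}-P_{\Omega_2}u_j^{(1)}\|_V^2$; the dual norms are controlled via $\|\lambda_j^{(1)}u_j^{(1)}\|_{H^{-1}(M_O)}\le C(n,\mathcal{A},\mathrm p)\sqrt{\lambda_j^{(1)}}\,\|u_j^{(1)}\|_{L_2}$ together with the uniform bound $\lambda_j^{(1)}\le\gamma_n$ obtained by comparison with a ball contained in $\Omega_1\cap\Omega_2$ (here the hypothesis $\Omega_1\Subset M_O$ guarantees room for such a ball). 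This yields $\|u_j^{(1)}-P_{\Omega_2}u_j^{(1)}\|_V^2\le C_j\,\phi(d_{\mathcal{HS}}(\Omega_1,\Omega_2))$ for $j=1,\dots,n$.

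Second, I would promote these per-eigenfunction bounds to an estimate uniform over the whole subspace $S_n^{(1)}$, which is what Lemma~\ref{Birkhoff} demands. For $u=\sum_{j}c_j u_j^{(1)}$ linearity of $P_{\Omega_2}$ gives $u-P_{\Omega_2}u=\sum_j c_j(u_j^{(1)}-P_{\Omega_2}u_j^{(1)})$; Cauchy--Schwarz together with the $L_2$-orthonormality $\|u\|_L^2=\mathrm p\sum_j|c_j|^2$ then produces a constant $A_n\asymp \phi(d_{\mathcal{HS}}(\Omega_1,\Omega_2))$ with $\|u-P_{\Omega_2}u\|_V^2\le A_n\|u\|_L^2$ for all $u\in S_n^{(1)}$. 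The companion estimate $\|u-P_{\Omega_2}u\|_L^2\le B_n\|u\|_L^2$ follows from the $V$-estimate by Friedrichs' inequality and condition \textbf{A1}, so that $B_n\asymp\phi(d_{\mathcal{HS}})$ as well, and shrinking $\delta_0$ guarantees $B_n<\mathrm p$.

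Third, Lemma~\ref{Birkhoff} then delivers $\lambda_n^{(1)}\ge\lambda_n^{(2)}-A_n/[(\sqrt{\mathrm p}-\sqrt{B_n})^2\mathrm p]$; since the denominator is bounded away from zero for $d_{\mathcal{HS}}\le\delta_0$ and $A_n\asymp\phi(d_{\mathcal{HS}})$, this reads $\lambda_n^{(2)}-\lambda_n^{(1)}\le C_n\,\phi(d_{\mathcal{HS}}(\Omega_1,\Omega_2))$. Interchanging the roles of $\Omega_1$ and $\Omega_2$---legitimate because both lie in $\mathcal{W}^\omega_{r,\vartheta}$ and $d_{\mathcal{HS}}$ is symmetric---gives the reverse inequality, and together they produce $|\lambda_n^{(1)}-\lambda_n^{(2)}|\le C_n\,\phi(d_{\mathcal{HS}})=C_n\,(\omega(d_{\mathcal{HS}})+d_{\mathcal{HS}})$, which is the claim after relabeling constants. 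I expect the main obstacle to be exactly the second step: passing from the individual eigenfunction estimates to a bound uniform over the span $S_n^{(1)}$ with the correct dependence on $\|u\|_L$, since this is precisely the hypothesis of the Birkhoff lemma and the place where eigenfunction normalization and $L_2$-orthogonality must be invoked with care; the symmetric application and the verification $B_n<\mathrm p$ are comparatively routine once $\delta_0$ is fixed.
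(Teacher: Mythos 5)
Your proposal is correct and follows essentially the same route as the paper: the paper proves Corollary \ref{cor_bur} by running the proof of Theorem \ref{result} verbatim, simply replacing the one-sided quantity $e(\Omega_1 \Delta \Omega_2, \partial \Omega_1)$ with $d_{\mathcal{HS}}(\Omega_1,\Omega_2)$ via the strong form of Theorem \ref{th_42} (available since both domains lie in $\mathcal{W}^{\omega}_{r,\vartheta}$), then applying Lemma \ref{Birkhoff} and symmetrizing, exactly as you do. Your second step (promoting the per-eigenfunction bounds to the span $S_n^{(1)}$ via orthonormality and Cauchy--Schwarz, and deriving $B_n$ from $A_n$ via Friedrichs' inequality and \textbf{A1}) is left implicit in the paper, so spelling it out is a useful addition rather than a deviation.
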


\end{document}